\theoremstyle{plain}
\newtheorem{theorem}[subsection]{Theorem}
\newtheorem{proposition}[subsection]{Proposition}
\newtheorem{lemma}[subsection]{Lemma}
\newtheorem{conjecture}{Conjecture}
\theoremstyle{remark}
\newtheorem{remark}[subsection]{Remark}
\newcommand{\noproof}{\hfill \qed}
\numberwithin{equation}{section}
\newcommand{\CC}{\mathbb{C}}
\newcommand{\C}{\mathcal{C}}
\newcommand{\LL}{\mathcal{L}}
\newcommand{\LLL}{\mathbb{L}}
\newcommand{\M}{\mathcal{M}}
\newcommand{\CCC}{\mathbb{C}[\mathcal{C}_4]}
\newcommand{\CCCn}{\mathbb{C}[\mathcal{C}_n]}
\newcommand{\CCV}{\mathbb{C}[\Com_4]}
\newcommand{\CCVn}{\mathbb{C}[\Com_n]}
\newcommand{\git}{\mathord{/\mkern-6mu/}}
\newcommand{\ring}{\CC[a_1, a_2, a_3, a_5, a_6, a_9, a_{10}, a_{14}]}
\newcommand{\Com}{Com}
\DeclareMathOperator{\Tr}{Tr}
\DeclareMathOperator{\GL}{GL}
\DeclareMathOperator{\Sym}{Sym}
\DeclareMathOperator{\Var}{Var}
\begin{document}
	
	\title[On the coordinate ring of the fourth Calogero-Moser space]{On the coordinate rings of Calogero-Moser spaces and the invariant commuting variety of \\ a pair of matrices}

	\author{F.~Eshmatov}
	\author{X.~García-Martínez}
	\author{Z.~Normatov}
	\author{R.~Turdibaev}
	
	\email{f.eshmatov@newuu.uz}
	\email{xabier.garcia.martinez@uvigo.gal}
	\email{z.normatov@mathinst.uz}
	\email{r.turdibaev@newuu.uz}

	\address[Farkhod Eshmatov]{New Uzbekistan University, Department of Mathematics, 
		100007 Tashkent, Uzbekistan}
	\address[Xabier García-Martínez]{CITMAga \& Universidade de Vigo, Departamento de Matemáticas, Esc.\ Sup.\ de Enx.\ Informática, Campus de Ourense, E--32004 Ourense, Spain}
	\address[Rustam Turdibaev]{CITMAga \& Universidade de Santiago de Compostela, Departamento de Matemáticas, Rúa Lope Gómez de Marzoa, s/n, 15782 Santiago de Compostela, Spain}
	\address[Zafar Normatov]{School of Mathematics, Jilin University, Changchun, 130012, China}

	\thanks{This work was supported by Agencia Estatal de Investigaci\'on de Espa\~{n}a (Spain, European FEDER support included), grants PID2020-115155GB-I00 and PID2021-127075NA-I00, and by Xunta de Galicia through the Competitive Reference Groups (GRC), ED431C 2023/31.}

	\begin{abstract} This paper presents a comprehensive description of the coordinate rings and Poisson brackets associated with the fourth Calogero-Moser space and invariant commuting pairs of matrices of size four. As an application, we compute their respective classes in the Grothendieck ring of the category of complex varieties and we offer some novel insights about the geometry of the Hilbert scheme of points on the affine plane.  
		
	\end{abstract}
	
	\subjclass[2020]{16R30, 13A50, 14R20, 14L30, 17B63}
	\keywords{Calogero-Moser space, Commuting variety, Hilbert scheme, Invariant theory, Poisson algebras}
	
	\maketitle
	
	\section{Introduction}

	Let $\M_{n}$ be the set of all $n\times n$ complex matrices. The general linear group of degree~$n$ acts on the set of $d$-copies of $\M_{n}$ by the simultaneous conjugation, i.e., for any $g \in \GL_n(\CC)$,
	\begin{equation}\label{general_generator}
		g \cdot (X_1, \dots, X_d) = (gX_1g^{-1}, \dots, gX_dg^{-1}).
	\end{equation}
	It consequently induces an action of $\GL_n$ on the algebra $\CC[\M_n^{d}]$ of polynomial functions on $\M_n^d$. A celebrated result of Procesi~\cite{Pr} and Razmyslov \cite{Ra} states that the algebra of~ $\GL_n$-invariant polynomials $\CC[\M_n^d]^{\GL_n}$  is generated by the trace functions $\Tr(A_1\cdots A_k)$, where $A_1,\dots,A_k\in\{X_1, \dots, X_d\}$
	and every relation between them follows from the Cayley-Hamilton theorem. 
	
	For an integer $n\geq 0$, we introduce the following two subspaces of $\mathcal{M}_{n} \times \mathcal{M}_{n}$
	\begin{equation}
		\label{cmrel}
		\mathrm{C}_n\coloneqq \{ (X,Y) \in  \mathcal{M}_{n} \times \mathcal{M}_{n}
		\mid \mathrm{rank}([X,Y] +I_n)=1\}\, .
	\end{equation}
	and 
	\begin{equation}
		\label{comrel}
		\mathrm{Com}_n\coloneqq \{ (X,Y) \in  \mathcal{M}_{n} \times \mathcal{M}_{n}
		\mid  [X,Y]=0\}\, ,
	\end{equation}
	The action \eqref{general_generator} restricts to these subspaces so we define the $n$-th 
	\textit{Calogero-Moser space} $\mathcal{C}_n$ and the $n$-th 
	\textit{invariant commuting variety} ${\mathcal Com}_n$ to be the respective quotient varieties 
	\[
	\mathcal{C}_n\coloneqq \mathrm{C}_n \git \GL_n , \qquad {\mathcal Com}_n\coloneqq \mathrm{Com}_n \git \GL_n 
	\]
	These spaces hold significant importance in various branches of mathematics, including algebraic geometry (Hilbert schemes), representation theory (double affine Hecke algebras), deformation theory (symplectic reflection algebras), and Poisson geometry. As such, it is of great interest to have an explicit description of their coordinate rings, $\mathbb{C}[\mathcal{C}_n]$ and $\mathbb{C}[{\mathcal Com}_n]$. The primary objective of the paper is to provide such a description for the case when $n = 4$. 
	
	Let us commence by reviewing what is currently known about these rings for a general $n$ and their explicit descriptions for $n=2, 3$.
	The natural embeddings of~$\mathcal{C}_n$ and ${\mathcal Com}_n$ into $(\mathcal{M}_{n}\times \mathcal{M}_{n}) \git \mathrm{GL}_n$ indicate that the rings
	$\mathbb{C}[\mathcal{C}_n]$ and $\mathbb{C}[{\mathcal Com}_n]$
	are quotients of $\CC[\M_n\times \M_n]^{\GL_n}$ and therefore, by the Procesi-Razmyslov result, they are generated by trace functions $\Tr(A_1 \cdots A_k)$, where each $A_i$ can either be~$X$ or $Y$. Due to the commutativity, the ring $\mathbb{C}[{\mathcal Com}_n]$ is generated by traces of the form $\Tr(X^iY^j)$, $1~\le~i,j\le n$. In fact, the same holds for $\mathbb{C}[\mathcal{C}_n]$. 
	Indeed, in Wilson's landmark work~\cite{W}, he proved that the space $\mathcal{C}_n$ is a smooth, irreducible, and complex symplectic variety with dimension of $2n$. To this end, he gave the following description of $\mathcal{C}_n$. Let us consider the space of all quadruples~$(X,Y,v,w)$ where $X, Y \in \M_n$  and
	$v$ and $w$ are column and row vectors of length $n$, respectively.
	Let $\widetilde{\C}_n$ be its subspace
	consisting of all $(X,Y,v,w)$ satisfying
	\begin{equation}\label{vector-covector}
		XY-YX+I_n = v w .
	\end{equation}
	Then the well-defined action of $\GL_n$  on $\widetilde{\C}_n$ given by 
	\[
	g \cdot (X,Y,v,w)=(gXg^{-1}, gYg^{-1}, gv, wg^{-1})
	\]
	is free and it yields a natural identification:
	$ \C_n \cong \widetilde{\C}_n /  \GL_n$.
	Now, using relation~\eqref{vector-covector},
	one can easily  show (see e.g.~\cite{CEET}) that the traces 
	$\Tr(X^iY^j), 1\le i,j\le n$ generate~$\mathbb{C}[\mathcal{C}_n]$.
	However, as in the description of the ring of invariants of pairs of matrices, it is more convenient to consider the correspondent traceless versions of the matrices $(X,Y)$:
	\begin{equation*}
		A\coloneqq X-\frac1n \Tr(X)I_n, \qquad B\coloneqq Y-\frac1n\Tr(B)I_n.
	\end{equation*}
	
	Procesi’s results \cite[Section 5]{Pr} provide a compelling motivation for considering traceless matrices, which considerably helps to ease the complexity of the computations. Specifically, there exists an isomorphism of \(\mathbb{Z}^d\)-graded algebras:  
	\[
	\CC[\M_n^d]^{\GL_n} \cong \mathbb{C}[u_1,u_2,\dots,u_d] \otimes \CC[\M_n(0)^d]^{\GL_n},
	\]  
	where \(\CC[\M_n(0)^{d}]^{\GL_n}\) denotes the algebra of \(\GL_n\)-invariant polynomial functions on the space of \(d\)-tuples of traceless \(n \times n\) matrices. Notably, the elements \(u_i\) correspond to the traces of generic matrices. Applied to our case, this corresponds to the factorizations into products 
	\begin{equation}
		\label{factor}
		\mathcal{C}_n =\CC^2 \times \mathcal{C}_n/ \CC^2 \ , \quad {\mathcal Com}_n= \CC^2 \times {\mathcal Com}_n/ \CC^2 \, .
	\end{equation}

	In \cite[Proposition 11.35]{EG} and \cite{KMN}, it is established that the generators of $\mathbb{C}[\mathcal{C}_2]$ and  $\mathbb{C}[{\mathcal Com}_2]$ are 
	\[
	a_1\coloneqq\Tr(X),\, a_2\coloneqq\Tr(Y),\, a_3\coloneqq\Tr(A^2),\, a_4\coloneqq\Tr(AB),\, a_5\coloneqq\Tr(B^2),
	\]
	and their presentations are
	\begin{eqnarray*}
		\mathbb{C}[\mathcal{C}_2] &\cong& \mathbb{C}[a_1,a_2] \otimes \mathbb{C}[a_3,a_4,a_5]/(a_4^2-a_3a_5-1),\\ [0.2cm]
		\mathbb{C}[{\mathcal Com}_2] &\cong& \mathbb{C}[a_1,a_2]\otimes \mathbb{C}[a_3,a_4,a_5]/(a_4^2-a_3a_5) , 
	\end{eqnarray*}
	where $ \mathbb{C}[a_1,a_2]$ is the coordinate ring of $\CC^2$ in \eqref{factor}.
	
	As for the case $n=3$, the problem is fully solved in~\cite{NT-TG}. Explicitly, they have proved that both $\CC[\mathcal{C}_3]$ and $\mathbb{C}[{\mathcal Com}_3]$ are generated by  $a_1,a_2,\dots,a_9$, where
	\[
	a_6\coloneqq\Tr(A^3),  a_7\coloneqq\Tr(A^2B),  a_8\coloneqq\Tr(AB^2),  a_9\coloneqq\Tr(B^3) \, ,
	\]
	subject to the following relations. For any, $v \in \CC$, let $I_v$ be the ideal of $\CC[a_3, \dots ,a_9] $ generated 
	\begin{align*}\label{CM_3_relations}
		r_{1}&=a_3a_9-2a_4a_8+a_5a_7,\\
		r_{2}&=a_5a_6-2a_4a_7+a_3a_8,\\
		r_{3,v}&=9va_3-a_3a_4^2+a_3^2a_5+6a_6a_8-6a_7^2,\\
		r_{4,v}&=9va_4-a_4^3+a_3a_4a_5+3a_6a_9-3a_7a_8,\\
		r_{5,v}&=9va_5-a_4^2a_5+a_3a_5^2+6a_7a_9-6a_8^2.
	\end{align*}
	It was shown that $I_v \cong I_1$ if $v\neq 0$, and 
	\[
	\CC[\mathcal{C}_3] \cong \CC[a_1,a_2] \otimes \CC[a_3, \dots ,a_9]/ I_1 \ , \qquad \mathbb{C}[{\mathcal Com}_3] \cong \CC[a_1,a_2] \otimes \CC[a_3, \dots ,a_9]/ I_0.
	\]    
	An alternative presentation of \(\mathbb{C}[\mathcal{C}_3]\) is implicitly found by setting $d=3$ in the work of Bonnaf\'e, who derived it using a different set of generators in \cite[Theorem~3.5]{Bon}.
	
	We shall focus now in the case of $n=4$.
	It is known that the large collection of generators of~$\CC[\M_4\times \M_4]^{\GL_4}$  can be narrowed down to a minimal set formed by only~$32$, catalogued in~\cite{Dr}. However, the full description of the ideal of relations is yet to be found (see~\cite{Dr, DS}). Most of these $32$ generators contain traces of expressions involving the commutators. Hence, if we confine our pairs of matrices to the invariant commuting variety, the set of generators becomes constrained to $a_1, \dots a_{14}$, where~$a_1, \dots ,a_9$ as above and the rest are given by
	\begin{equation*}\label{generators}
		a_{10}\coloneqq\Tr(A^4), a_{11}\coloneqq\Tr(A^3B), a_{12}\coloneqq\Tr(A^2B^2), 
		a_{13}\coloneqq\Tr(AB^3),  a_{14}\coloneqq\Tr(B^4). 	
	\end{equation*}

	In~\cite{N}, the author showed that the same
	fourteen trace functions generate $\CCC$. Therefore, both~$\CCC$ and~$\mathbb{C}[{\mathcal Com}_4]$ can be expressed as quotients of the polynomial ring $\CC[a_1, \dots, a_{14}]$. Our attention will now turn to capturing and identifying the ideals of relations $I(\mathcal{C}_4)$ and $I({\mathcal Com}_4)$ in $\CC[a_1, \dots, a_{14}]$. 
	
	The main result in our paper is to find explicit descriptions of these ideals. 
	To this end, we use the Poisson brackets on the rings $\CCC$ and $\mathbb{C}[{\mathcal Com}_4]$, which arise from the symplectic structures on 
	$\mathcal{C}_4$ and on ${\mathcal Com}_4$ respectively. Let us briefly describe this symplectic structure. First, the space  $\mathcal{M}_n^2$ of the pair of matrices
	can be naturally identified with the cotangent bundle of $\mathcal{M}_n$, and the symplectic form defined by:
	\begin{equation}
		\label{symppair}
		\omega((X_1,X_2),(Y_1,Y_2))\coloneqq\Tr(X_1Y_2)-\Tr(X_2Y_1) \, .  
	\end{equation}
	Moreover, the moment map $\mu\colon \mathcal{M}_n^2 \to \mathfrak{sl}_n$ is given by $(X,Y)\mapsto XY-YX$, where $\mathfrak{sl}_n$ is the subspace of traceless matrices. Let $O\subset \mathfrak{sl}_n$ be a closed $\GL_n$-orbit with respect to the adjoint action. Then the orbit space $\mathfrak{M}_O\coloneqq\mu^{-1}(O)\git\GL_n$ is called an \textit{affine quiver variety} (see \cite{Nak}). The symplectic structure on $\mathcal{M}_n^2$ induces, via the Marsden-Weinstein symplectic reduction construction~\cite{MW}, a canonical symplectic structure on $\mathfrak{M}_O$. Note that both $\mathcal{C}_n \cong \mathfrak{M}_{O_2}$ and ${\mathcal Com}_n \cong \mathfrak{M}_{O_1}$ can be realized as quiver varieties. Here $O_1=\{0\}$ 
	and $O_2$ is the set consisting of $n\times n$
	matrices of the from $S-I_n$, where $S$ is a rank $1$ semi-simple matrix such that $\Tr(S)=n$. 
	Symplectic structures on $\mathcal{C}_n$ and ${\mathcal Com}_n$ yield a Poisson algebra structure on their corresponding coordinate rings.  Note that these rings are quotients of the polynomial algebra
	$B=\mathbb{C}[a_1,\dots,a_{n^2+2n}]$, where the generators are exactly ${ \Tr(A^iB^j)}$, with ${0\leq i,j\leq n }$.
	Hence using the Poisson structure on $\mathbb{C}[\mathcal{C}_n]$ (or/and $\mathbb{C}[{\mathcal Com}_n]$), we can define on~$B$  nonassociative brackets $\{-,-\}$, which is a derivation on the first and second components. This bracket will play an essential role in our construction.
	
	For $n=4$, as we discussed above, the number of minimal generators is $14$, and 
	the bracket $\{ - , -\}$ is defined on $\CC[a_1, \dots, a_{14}]$. Using simple linear algebra tools, we obtain a defining relation in $\CCC$:
	\[
	r_1=8a_3+a_3(a_4^2-a_3a_5)-2(a_7^2-a_6a_8)+2(a_3a_{12}-2a_4a_{11}+a_5a_{10}).
	\]
	Let  $I$  be the  subideal of $I(\mathcal{C}_4)$ in $\mathbb{C}[a_3,...,a_{14}]$ generated by $r_1$ via $\{-,-\}$ and the multiplication in the ring. Similarly, using as defining relation $r_1-8a_3$ in ${\mathcal Com}_4$, we let $J$ to be the subideal of  $I({\mathcal Com}_4)$ generated by $r_1-8a_3$. In this way, we obtain our main result:
	
	\begin{theorem}
		\label{mainThm}
		We have $I=I(\mathcal{C}_4)$ and $J=I({\mathcal Com}_4)$. In particular, there are Poisson algebra isomorphisms
		\[
		\CCC \cong  \CC[a_1,a_2] \otimes \dfrac{\CC[a_3,\dots, a_{14}]}{I} \, , \] 
		\[ \mathbb{C}[{\mathcal Com}_4] \cong 
		\CC[a_1,a_2]\otimes \dfrac{\CC[a_3,\dots, a_{14}]}{J} \, .
		\] 
	\end{theorem}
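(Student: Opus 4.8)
The plan is to establish, working throughout in the traceless polynomial ring $R\coloneqq\CC[a_3,\dots,a_{14}]$, the four inclusions $I\subseteq I(\mathcal{C}_4)$, $I(\mathcal{C}_4)\subseteq I$, $J\subseteq I({\mathcal Com}_4)$ and $I({\mathcal Com}_4)\subseteq J$; by the factorization \eqref{factor} this suffices, the free factor $\CC[a_1,a_2]$ splitting off as the polynomial tensor factor. The two forward inclusions are the soft half. By construction $r_1\in I(\mathcal{C}_4)$ and $r_1-8a_3\in I({\mathcal Com}_4)$, and the symplectic forms on $\mathcal{C}_4$ and ${\mathcal Com}_4$ make $R/I(\mathcal{C}_4)$ and $R/I({\mathcal Com}_4)$ Poisson algebras. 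Since the bracket $\{-,-\}$ on $R$ is a biderivation lifting exactly these Poisson structures through the quotient maps, the vanishing ideals $I(\mathcal{C}_4)$ and $I({\mathcal Com}_4)$ are Poisson ideals for it; hence $\{b,r_1\}\in I(\mathcal{C}_4)$ for every $b\in R$, and closing under iterated brackets and under multiplication by $R$ keeps us inside $I(\mathcal{C}_4)$. This yields $I\subseteq I(\mathcal{C}_4)$, and identically $J\subseteq I({\mathcal Com}_4)$.

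The content lies in the reverse inclusions, and the first task is to make $I$ and $J$ explicit. Beginning from the seed relation, I would compute the brackets $\{a_i,r_1\}$ for $i=3,\dots,14$, then the iterated brackets, reducing at each stage modulo the ideal produced so far; by Noetherianity of $R$ this process terminates with a finite Poisson-generating set $r_1,\dots,r_m$ of $I$ (and analogously for $J$). The computational engine here is the explicit polynomial expression of the Poisson brackets $\{\Tr(A^iB^j),\Tr(A^kB^l)\}$ in terms of the generators $a_3,\dots,a_{14}$.

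I would then finish the commuting case first, where both the seed $r_1-8a_3$ and the ideal $J$ are homogeneous for the weight grading $\deg a_3=\deg a_4=\deg a_5=2$, $\deg a_6=\dots=\deg a_9=3$, $\deg a_{10}=\dots=\deg a_{14}=4$. A Gröbner basis of $J$ yields its Krull dimension and Hilbert series, and the aim is to show that $J$ is \emph{prime} with $\dim R/J\le 6$. Granting this, the surjection $R/J\twoheadrightarrow R/I({\mathcal Com}_4)$ forces $\dim R/J\ge 6$, hence equality; and since $R$ is catenary and equidimensional, two primes $J\subseteq I({\mathcal Com}_4)$ of the same coheight $6$ must coincide. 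The value $6=\dim({\mathcal Com}_4/\CC^2)$ follows from the irreducibility of the commuting variety together with $\dim({\mathcal Com}_4)=8$.

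For $\mathcal{C}_4$ the relation $r_1$ is inhomogeneous—its weight-$2$ term $8a_3$ against the weight-$6$ remainder—reflecting the $+I_4$ in \eqref{cmrel}, and its leading form is precisely $r_1-8a_3$. Passing to the associated graded for the weight filtration, the deformation $\mathcal{C}_4\rightsquigarrow{\mathcal Com}_4$ becomes a flat degeneration, so that $R/I$ and $R/J$ share their Hilbert series and in particular $\dim R/I=6$; feeding this together with $I\subseteq I(\mathcal{C}_4)$ and the smoothness and irreducibility of $\mathcal{C}_4$ (Wilson, $\dim\mathcal{C}_4=8$) into the same catenary prime-containment argument gives $I=I(\mathcal{C}_4)$. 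The main obstacle is located entirely in these reverse inclusions and is twofold: first, producing and certifying the finite Poisson-generated presentations of $I$ and $J$, a finite but heavy symbolic computation; and second—the genuinely delicate point—proving that the Poisson-generated ideal is already \emph{prime} (equivalently, that it is radical and does not stop short as a proper subideal of the full vanishing ideal) and controlling the associated graded $\operatorname{gr}I$ so that the flat-degeneration comparison is valid. The dimension-and-containment bookkeeping is what converts these two facts into the stated equalities.
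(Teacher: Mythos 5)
Your soft half --- that $I\subseteq I(\mathcal{C}_4)$ and $J\subseteq I({\mathcal Com}_4)$ because the vanishing ideals are Poisson ideals containing the seeds --- agrees with the paper. The reverse inclusions, however, rest on two claims that you yourself flag as ``delicate'' but never prove, and they are not loose ends: they are the whole theorem. First, primality. Your containment argument (two primes of equal coheight in an affine algebra coincide) needs $J$, and after the degeneration step also $I$, to be \emph{prime}, not merely of the right dimension: $(xy)\subsetneq(x)$ and $(x^2)\subsetneq(x)$ in $\CC[x,y]$ are strict containments of ideals of equal dimension. You offer no mechanism for proving primality; a Gr\"obner basis yields dimension and Hilbert series but not primality, and certifying primality of an ideal with a dozen generators in twelve variables is a computation of a different order --- one the paper never performs (its primality statement appears only as an after-the-fact remark, not as an ingredient of the proof). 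Second, the degeneration. To conclude that $R/I$ and $R/J$ share a Hilbert series you need the ideal of leading forms $\operatorname{gr}I$ to equal $J$. But the leading forms of a generating set of $I$ only generate a subideal of $\operatorname{gr}I$ in general (that they generate all of it is precisely a Macaulay/Gr\"obner basis condition), and your $J$ is defined by Poisson generation from $r_1-8a_3$, which is not visibly the ideal of leading forms: the bracket lifted from $\CCC$ is not weight-homogeneous (e.g.\ $\{a_3,a_{13}\}=6a_{12}+12$), so bracketing does not commute with taking leading forms; the bracket relevant on the commuting side is the homogeneous one lifted from $\CCV$, a different bracket, and your proposal silently treats the two as one.

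It is worth seeing how the paper's proof is engineered to avoid both problems. It invokes the Etingof--Ginzburg theorem that $\CCC$ and $\CCV$ are free of rank $4!=24$ as modules over $\ring$, writes down $24$ explicit monomials, verifies by a Gr\"obner normal-form computation that the free module they span injects into $\CC[a_1,a_2]\otimes\CC[a_3,\dots,a_{14}]/I$, and checks that its Hilbert series equals that of the quotient; since the quotient surjects onto $\CCC$ and a surjection between free modules of the same finite rank is an isomorphism, $I=I(\mathcal{C}_4)$ follows with no primality or flatness input, and the commuting case is handled by truncating a Gr\"obner basis of $I$ to its top-weight forms and repeating the same freeness argument. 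If you want to salvage your route, that freeness theorem --- rather than a primality certificate --- is the external input you should be reaching for.
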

	For $n = 1, \dots , 4$, both $I(\mathcal{C}_n)$ and $I({\mathcal Com}_n)$ are generated by a single relation obtained through the Cayley-Hamilton theorem or the Jacobi identity. We propose the following conjecture for any $n\ge 1$.
	\begin{conjecture}\label{conjecture} Let $k=\frac{n(n+3)}{2}$. Then 
		\[
		\C[\mathcal{C}_n] \cong  \CC[a_1,a_2] \otimes \dfrac{\CC[a_3,\dots, a_{k}]}{I} \, , \] 
		\[ \C[{\mathcal Com}_n] \cong 
		\CC[a_1,a_2]\otimes \dfrac{\CC[a_3,\dots, a_{k}]}{J} \, , 
		\] 
		where $I$ and $J$ are generated via the bracket $\{-,-\}$ by a single relation which can naturally be obtained using the Cayley-Hamilton theorem on corresponding varieties.
	\end{conjecture}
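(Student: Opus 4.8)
The plan is to treat the two families $\{\mathcal{C}_n\}$ and $\{{\mathcal Com}_n\}$ in parallel, exploiting two extra symmetries that persist for all $n$: the $\SL_2$-action mixing the pair $(A,B)$ and the induced Poisson bracket, whose quadratic Hamiltonians are exactly $a_3=\Tr(A^2)$, $a_4=\Tr(AB)$, $a_5=\Tr(B^2)$. Because the symplectic form \eqref{symppair} is $\SL_2$-invariant, the operators $\{a_3,-\}$, $\{a_4,-\}$, $\{a_5,-\}$ realize the $\mathfrak{sl}_2$-action on the coordinate ring, so the phrase ``generated via $\{-,-\}$ by a single relation'' already includes closing up the $\SL_2$-module of that relation. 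Thus the first reduction is representation-theoretic: decompose $B=\CC[a_3,\dots,a_k]$ and the ideals $I(\mathcal{C}_n)$, $I({\mathcal Com}_n)$ into $\SL_2$-isotypic components (the degree-$d$ generators $\Tr(A^d),\dots,\Tr(B^d)$ span a copy of $\Sym^d$), and reformulate the conjecture as the statement that the bracket-closure of one highest-weight relation meets every isotypic component of the ideal.

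First I would pin down the generators. For ${\mathcal Com}_n$ this is cleanest: a generic commuting pair is simultaneously diagonalizable, so on the dense diagonalizable locus $\Tr(X^iY^j)$ is the bivariate power sum $p_{i,j}=\sum_{a=1}^{n}\xi_a^i\eta_a^j$ in the $n$ eigenvalue-pairs, and $\CC[{\mathcal Com}_n]$ is identified, after passing to reduced structures, with the ring of multisymmetric functions $\CC[(\CC^2)^n]^{S_n}=\CC[\Sym^n(\CC^2)]$. The classical fact that the $p_{i,j}$ with $i+j\le n$ generate this ring furnishes exactly the $k=\tfrac{n(n+3)}{2}$ generators and shows $\dim{\mathcal Com}_n=2n$. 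For $\mathcal{C}_n$ one instead uses Wilson's description together with the rank-one relation \eqref{vector-covector} to reduce every word $\Tr(A^iB^j)$ with $i+j>n$ to lower degree, obtaining the same generating set; here $\mathcal{C}_n$ is the smooth symplectic deformation of $\Sym^n(\CC^2)$, again of dimension $2n$, so $\dim\mathcal{C}_n/\CC^2=2n-2$.

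Next I would produce the single Cayley--Hamilton relation. On $\mathcal{C}_n$ the matrix $M\coloneqq[A,B]+I_n=vw$ has rank one with $\Tr(M)=n$, hence $M^2=nM$, i.e.\ $[A,B]^2=(n-2)[A,B]+(n-1)I_n$; on ${\mathcal Com}_n$ the corresponding identity is $[A,B]=0$. Contracting these matrix identities with suitable words in $A,B$ and reducing all commutator traces back to the $a_i$ (again via \eqref{vector-covector}, resp.\ commutativity) yields a distinguished relation $r_n\in I(\mathcal{C}_n)$ together with its shadow $r_n-2n\,a_3\in I({\mathcal Com}_n)$; for $n=4$ these specialize to the relations $r_1$ and $r_1-8a_3$ of the main theorem. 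I would then check, as in that theorem, that $r_n$ is the highest-weight vector of the lowest $\SL_2$-submodule of the ideal, so that the inclusions $I\subseteq I(\mathcal{C}_n)$ and $J\subseteq I({\mathcal Com}_n)$ are immediate: both targets are Poisson ideals containing $r_n$, resp.\ $r_n-2n\,a_3$.

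The main obstacle is the reverse inclusions $I(\mathcal{C}_n)\subseteq I$ and $I({\mathcal Com}_n)\subseteq J$, a uniform statement that the bracket-closure of one relation is ``big enough.'' My strategy is to establish the equivalent dimension bound $\dim(B/J)\le 2n-2$ with reducedness, the opposite inequality being automatic from $J\subseteq I({\mathcal Com}_n)$ and $\dim{\mathcal Com}_n/\CC^2=2n-2$. I would attack this through the multisymmetric model: the relations among the power sums $p_{i,j}$ are classically generated by Newton-type identities, so the task becomes showing each such generator lies in the $\mathfrak{sl}_2$-and-multiplication closure of $r_n-2n\,a_3$, and the $\SL_2$-isotypic bookkeeping should collapse the a priori infinitely many relations to finitely many highest-weight ``seeds'' to be matched against iterated brackets of the single seed relation. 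The $\mathcal{C}_n$ case would then follow by a flat-degeneration argument: the parameter distinguishing $M^2=nM$ from $[A,B]=0$ defines a $\CC^\ast$-equivariant family over $\CC$ (the analogue of the isomorphisms $I_v\cong I_1$, $v\ne0$, found for $n=3$), whose flatness and $\SL_2$-equivariance transport the equality of ideals from the central fibre ${\mathcal Com}_n$ to the generic fibre $\mathcal{C}_n$. I expect the genuinely hard, and presently only conjectural, point to be the uniform control in $n$ of this isotypic matching --- equivalently, an $n$-independent bound on the degrees in which new generators of $I({\mathcal Com}_n)$ appear --- since for each fixed $n$ it reduces to a finite Gr\"obner verification but no uniform mechanism is yet known.
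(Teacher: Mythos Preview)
The statement you are addressing is a \emph{conjecture} in the paper, not a theorem: the paper offers no proof and explicitly leaves it open, remarking only that Procesi's observation about the Chevalley restriction isomorphism \eqref{VacDom} ``may lead us to a proof.'' The paper's contribution is the verification for $n\le 4$, with the case $n=4$ handled by an explicit finite computation (Theorem~\ref{main_theorem}): one produces enough bracket-iterates of $r_1$ by hand, computes the Hilbert series of the resulting quotient with \texttt{Macaulay2}, and matches it against the known Hilbert series of $\CCC$ coming from its freeness as a module over symmetric polynomials (Remark~\ref{rem:free}). There is no uniform argument in $n$ to compare your plan against.

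As a research outline your proposal is coherent and you correctly isolate the crux: the inclusion $I\subseteq I(\mathcal{C}_n)$ is cheap (both targets are closed under the bracket and contain the seed), while $I(\mathcal{C}_n)\subseteq I$ is the substance, and you openly label that step ``presently only conjectural.'' So this is a strategy, not a proof, and you already know it. Two technical cautions are worth flagging. First, on the polynomial ring $B=\CC[a_3,\dots,a_k]$ the bracket is \emph{not} a Lie bracket: the paper stresses that the Jacobi identity fails upstairs (its failure is in fact how $r_1$ is rediscovered), so speaking of an ``$\mathfrak{sl}_2$-action on $B$'' via $\{a_3,-\},\{a_4,-\},\{a_5,-\}$ requires care---the operators are derivations but need not satisfy the $\mathfrak{sl}_2$ relations on $B$, only on the quotient. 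Your isotypic decomposition therefore lives naturally on $\CC[{\mathcal Com}_n]$ or $\CCCn$, not on $B$, which complicates the ``match highest-weight seeds'' step. Second, the flat-degeneration transport from ${\mathcal Com}_n$ to $\mathcal{C}_n$ is plausible but not automatic: you would need that the bracket-generated ideal itself deforms flatly, not merely the full ideal of relations, and the $n=3$ isomorphism $I_v\cong I_1$ was checked directly rather than deduced from such a principle.
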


	Recently, Bonnaf\'e and Thiel proposed an elegant algorithm for computing a presentation of the coordinate ring \(\mathbb{C}[\mathcal{C}_n]\) (see \cite[Theorem 3.15]{BT}). They use a different set of generators, which may yield a presentation that differs from the one in the above conjecture. However, Procesi pointed out to the fourth author, in private communication, that the Poisson bracket in \(\mathbb{C}[{\mathcal Com}_n]\) can be obtained via a type of Chevalley restriction theorem (see~\eqref{VacDom}) from the canonical Poisson bracket on \(\mathbb{C}[x_1, \ldots, x_n, y_1, \ldots, y_n]\). We hope that this fact may lead us to a proof of the above conjecture.

	Let \({\mathcal Var}_\C\) be the category of quasiprojective complex varieties, and let \(K_0({\mathcal Var}_\C)\) be its Grothendieck ring. Using the presentation in Theorem~\ref{mainThm}, we can stratify both~\(\mathcal{C}_4\) and \({\mathcal Com}_4\) to compute their classes in \(K_0({\mathcal Var}_\C)\). Explicitly, we obtain:
	\[
	[{\mathcal Com}_4] = \LLL^8, \quad [\mathcal{C}_4] = \LLL^8 - \LLL^7 + 2 \LLL^6-\LLL^5.
	\]
	
	Now we would like to discuss how our result can be applied to the geometry of Hilbert schemes. Recall that the $n$-th Hilbert scheme of points on the plane, denoted as $\mathrm{Hilb}_n(\mathbb{C}^2)$, is defined as $\widetilde{H}\git\mathrm{GL}_n$, where 
	\[
	\widetilde{H}\coloneqq\{(X,Y,i)\in \mathcal{M}_{n} \times \mathcal{M}_{n} \times \mathbb{C}^n \mid XY=YX ,\quad \mathbb{C}[X,Y]i= \mathbb{C}^n\} \, ,
	\]
	and this is acted upon by $\mathrm{GL}_n$ as $g\cdot(X,Y,i)=(gXg^{-1}, gYg^{-1}, gi)$. Moreover, in~\cite[Theorem 8.13]{Nak} an irreducible highest weight representation of a product of the Heisenberg and Clifford algebras on the direct sum of cohomologies $\oplus_{n} H^{\bullet}( \mathrm{Hilb}_n(\mathbb{C}^2))$ has been constructed. Although the construction is clear, its complexity calls for a desire to find a simpler proof. Additionally, in~\cite{Nak} it is indicated that the characters of representations of affine Lie algebras have certain modularity properties. These features of affine Lie algebras and modularity suggest connections to loops and elliptic curves. It is natural to expect that these objects would appear in $\mathrm{Hilb}_n(\mathbb{C}^2)$, and that the above construction of the highest weight representation would involve them. However, no elliptic curves or loops have been found in $\mathrm{Hilb}_n(\mathbb{C}^2)$, and the problem remains unsolved.

	Wilson and Nakajima independently demonstrated in their respective works \cite{W} and \cite{Nak} that $\mathrm{Hilb}_n(\mathbb{C}^2)$ and $\mathcal{C}_n$ are naturally diffeomorphic. This diffeomorphism allows us to establish an equivalence between their cohomology rings:
	\[
	\bigoplus_{n\ge 0} H^{\bullet}(\mathrm{Hilb}_n(\mathbb{C}^2)) \cong  \bigoplus_{n \ge 0} H^{\bullet} (\mathcal{C}_n) 
	\]
	Consequently, we can investigate Nakajima's construction on the Calogero-Moser spaces. Since $\mathcal{C}_n$ is an irreducible affine algebraic variety, its singular cohomology coincides with its de Rham cohomology $\mathrm{H}^{\bullet}_{dR}(\mathcal{C}_n)$. Therefore, the action of the Heisenberg Lie algebra is defined on the classes of differential forms on $\mathcal{C}_n$. We even propose extending this construction on forms by utilizing the Hochschild-Kostant-Rosenberg theorem. By identifying the graded rings $\Omega^{\ast}(\mathcal{C}_n)$, representing K\"ahler's differential forms on $\mathcal{C}_n$, and the Hochschild homology $HH_{\bullet}(\mathcal{O}(\mathcal{C}_n))$, associated with the coordinate ring $\mathcal{O}(\mathcal{C}_n)$, we aim to define an action of the Heisenberg Lie algebra (or potentially its loop Lie algebra) on the direct sum of Hochschild chain complexes of $\mathcal{O}(\mathcal{C}_n)$. Additionally, we intend to ensure that this action commutes with Hochschild differentials.
	To provide a description of the ring $\Omega^{\ast}(\mathcal{C}_n)$ or the Hochschild homology $HH_{\bullet}(\mathcal{O}(\mathcal{C}_n))$, it is necessary to establish an explicit presentation of the algebra $\mathcal{O}(\mathcal{C}_n)$ as in Theorem~\ref{mainThm} (or Conjecture~\ref{conjecture}, in general). 
	Having an explicit presentation of $\mathcal{O}(\mathcal{C}_n)$ also allows for the straightforward generation of elliptic curves and algebraic loops within $\mathcal{C}_n$. Our proposed representation suggests that $\mathcal{C}_n$ can be viewed as the product of the affine plane $\mathbb{A}_{\mathbb{C}}^2$ and $\mathcal{C}_n^0$, which is isomorphic to a subvariety of $\mathcal{C}_n$ comprising traceless matrix pairs. As a result, elliptic curves and loops can be realized on the affine plane $\mathbb{A}_{\mathbb{C}}^2$.
	
	Let us now examine how the presentation of the commuting variety ${\mathcal Com}_n$ can be used to uncover elliptic curves and algebraic loops on $\mathrm{Hilb}_n(\mathbb{C}^2)$.
	To this end, we need another description of $\mathcal{O}({\mathcal Com}_n)$ due to Gan and Ginzburg \cite{GG},  Domokos \cite{D} and Vaccarino \cite{Va}. 
	Let $U_n$ be the variety of pairs of diagonal $n\times n$ matrices.
	Since~$U_n$ can be identified with 
	the $n$-th product $(\mathbb{C}^2)^n$,  
	its coordinate ring is isomorphic to the ring $B_n\coloneqq\mathbb{C}[x,y]^{\otimes n}$. The symmetric group $S_n$ acts on $U_n$ by permutation of diagonal entries and the corresponding action on $B_n$ is given by permutation of factors. On the other hand, there is a group monomorphism $ \phi\colon S_n \to \mathrm{GL}_n$ given by the permutation representation, which induces  
	the action of $S_n$ on $\mathrm{Com}_n$. Clearly, the embedding of $U_n$ into $\mathrm{Com}_n$ is $S_n$-equivariant and we have a natural embedding $i\colon U_n\git S_n \hookrightarrow \mathrm{Com}_n\git S_n$. Composing $i$ with the projection   
	$\mathrm{Com}_n\git S_n \twoheadrightarrow \mathrm{Com}_n\git\mathrm{GL}_n$ yields a morphism of algebraic varieties  $\pi\colon U_n\git S_n \to {\mathcal Com_n}$. It has been proved in \cite{D,GG,Va} that $\pi$ is an isomorphism, or equivalently, the restriction homomorphism
	\begin{equation}
		\label{VacDom}
		\pi_{\ast} \colon \mathcal{O}({\mathcal Com}_n) \to B_n^{S_n} \,   
	\end{equation}
	is an algebra isomorphism.
	Note that, since the space $U_n\git S_n$ can be identified with the $n$-th symmetric power $ S^n \mathbb{C}^2\coloneqq (\mathbb{C}^2)^n\git S_n$, we get 
	the isomorphism
	\[
	S^n \mathbb{C}^2 \cong {\mathcal Com_n}
	\]
	defined by
	\begin{equation*}
		((\lambda_1,\mu_1), \dots ,(\lambda_n,\mu_n)\big) \mapsto 
		(\mathrm{Diag}[\lambda_1, \dots ,\lambda_n], \mathrm{Diag}[\mu_1, \dots ,\mu_n]) .
	\end{equation*}

	Recall that the \textit{Hilbert-Chow morphism} is the map $\rho\colon \mathrm{Hilb}_n(\mathbb{C}^2) \to  S^n \mathbb{C}^2$ defined as follows. Let $(X,Y,i) \in \mathrm{Hilb}_n(\mathbb{C}^2)$. Since $[X,Y]=0$, both matrices can be simultaneously transformed into upper triangular matrices with diagonal entries~$(\lambda_1, \dots ,\lambda_n)$ and $(\mu_1, \dots ,\mu_n)$ respectively. In this way, there is a well-defined morphism $\rho(X,Y,i)\coloneqq\big((\lambda_1,\mu_1), \dots ,(\lambda_n,\mu_n)\big)$. Thus, the Hilbert-Chow morphism can be defined as the map
	\begin{equation}
		\label{HCcom}
		\rho\colon  \mathrm{Hilb}_n(\mathbb{C}^2) \to {\mathcal Com_n}\, , \quad (X,Y,i) \mapsto  (X,Y) \, .
	\end{equation}
	
	Now, by selecting an elliptic curve or a loop on the $\mathbb{C}^2$ component of ${\mathcal Com_n}$ in \eqref{factor}, we can easily pull back the corresponding curve or loop through the mapping~$\rho$. Indeed, let us consider the elliptic curve $y^2=x^3+ax+b$ on $\mathbb{C}^2$. We define
	\[
	X=\mathrm{Diag}[\lambda_1x,\lambda_2x, \dots, \lambda_nx], \quad  Y=\mathrm{Diag}[0,\dots, 0, y],
	\]
	where $\lambda_1,\dots,\lambda_n$ are distinct and $\lambda_1+\dots+\lambda_n=1$. Clearly, this choice of $(X,Y)$ gives us an elliptic curve on ${\mathcal Com_n}$. For the vector $i=(1,1,\dots,1)^T \in \mathbb{C}^n$, it is evident that $i, Xi,X^2i,\dots, X^{n-1}i$ form a basis for $\mathbb{C}^n$.
	Therefore, for every point~$(x,y)$ on the elliptic curve, the triple $(X,Y,i)$ represents a point on $\mathrm{Hilb}_n(\mathbb{C}^2)$. Similarly, by selecting a loop $xy=1$, we can generate the corresponding loop on~$\mathrm{Hilb}_n(\mathbb{C}^2)$.
	
	Moreover, let $G$ be the unimodular affine Cremona group of $\mathbb{C}^2$, which is the normal subgroup of $\mathrm{Aut}_{\mathbb C}(\mathbb{C}[x,y])$ preserving the symplectic form $dx \wedge dy$. The group $G$ acts on $\mathcal{C}_n$, $\mathrm{Hilb}_n(\mathbb{C}^2)$, and ${\mathcal Com_n}$ as follows: for any $\sigma \in G$, $\sigma.(X,Y,i)=(\sigma^{-1}(X), \sigma^{-1}(Y))$, where we replace $(x,y)$ with the polynomials $\sigma^{-1}(x), \sigma^{-1}(y) \in \mathbb{C}[x,y]$ in the expression $(X,Y)$.
	This action gives rise to embeddings of $G$ into the automorphism groups of the varieties $\mathcal{C}_n$, $\mathrm{Hilb}_n(\mathbb{C}^2)$, and ${\mathcal Com_n}$. Therefore, under the action of $G$ on these varieties, elliptic curves and loops are transformed into elliptic curves and loops, respectively.

	\section{\texorpdfstring{The generating relation of $\CCC$}{The generating relation of CC4}}\label{s:relation}
	In this section we shall obtain an algebraic relation between the generators of~$\CCC$ listed above. It will be convenient to recollect some of the techniques we shall use.
	
	Firstly, note that identity~\eqref{vector-covector} provides a method of permuting variables by ``paying a small price''.
	Moreover, it implies that the morphism $\Phi \colon \C_n \to \C_n$, commonly referred as \emph{Fourier map}, induced by sending the pair of matrices $(X,Y)$ to~$(Y, -X)$, gives rise to an automorphism of $\CC[\C_4]$, explicitly defined on the generators by:
	\begin{equation}\label{Fourier}
		\begin{multlined}
			(a_1,a_2, a_3,a_4,a_5,a_6,a_7,a_8,a_9,a_{10},a_{11},a_{12},a_{13},a_{14}) \mapsto \\
			(a_2,-a_1, a_5,-a_4,a_3,a_9,-a_8,a_7,-a_6,a_{14},-a_{13},a_{12},-a_{11},a_{10})
		\end{multlined}
	\end{equation}
	
	In addition, we shall frequently use the Cayley-Hamilton theorem which states that for any traceless $M \in \M_4$,
	\begin{equation}\label{tracelessCayleyHamilton}
		M^4 = \frac12\Tr(M^2)M^2+\frac13\Tr(M^3)M-\frac18(\Tr^2(M^2)-2\Tr(M^4))I_4.	
	\end{equation}
	
	As an immediate consequence we obtain the following identities:
	\begin{align}\label{M^6}
		\Tr(M^6)={}&\frac34\Tr(M^4)\Tr(M^2)+\frac13\Tr^2(M^3)-\frac18\Tr^3(M^2), \\
		\Tr(A^4B^2)={}&\frac12\Tr(A^2)\Tr(A^2B^2)+\frac13\Tr(A^3)\Tr(AB^2)\nonumber\\
		&-\frac18(\Tr^2(A^2)-2\Tr(A^4))\Tr(B^2).
		\label{A^4B^2}
	\end{align}
	
	Using the strategies named above, we can obtain the following general result.
	\begin{lemma}\label{A^4B^2_} 	
		Let $(X,Y) \in \C_n$, then the following identities hold:
		\begin{align}
			\Tr(ABAB)=&\Tr(A^2B^2) + \frac12n(n-1) \label{ABAB-A^2B^2} \\
			\Tr(A^3BAB)=&\Tr(A^4B^2)+\frac{2n-3}2\Tr(A^2) \label{A^2BA^2B-A^4B^2} \\ 
			\Tr(A^2BA^2B)=&\Tr(A^4B^2)+(n-2)\Tr(A^2)		\label{A^3BAB-A^4B^2}
		\end{align}
	\end{lemma}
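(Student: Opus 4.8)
The plan is to separate two ingredients and then combine them. The first is a purely formal trace identity valid for \emph{any} pair of matrices $P,Q$: by cyclicity of the trace one has $\Tr((PQ+QP)[P,Q])=\Tr((PQ)^2)-\Tr((QP)^2)=0$, and hence
\[
\Tr(PQPQ)=\Tr(P^2Q^2)+\tfrac12\Tr([P,Q]^2).
\]
The second is the only point at which the Calogero--Moser condition enters. By the vector--covector relation \eqref{vector-covector} we have $[A,B]=[X,Y]=vw-I_n$, so the matrix $M\coloneqq[A,B]+I_n=vw$ has rank one and trace $n$. Consequently $M^2=\Tr(M)M=nM$ and, more generally, $MNM=\Tr(NM)\,M$ for every matrix $N$. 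Writing $C\coloneqq[A,B]$, the first relation reads $C^2=(n-2)C+(n-1)I_n$; note also $\Tr(C)=0$, and $\Tr(A^2C)=0$ is immediate from cyclicity.

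With these in hand, identity \eqref{ABAB-A^2B^2} is immediate: apply the formal identity with $(P,Q)=(A,B)$ and evaluate $\Tr(C^2)=(n-2)\Tr(C)+(n-1)n=n(n-1)$. For \eqref{A^3BAB-A^4B^2} I would apply the formal identity with $(P,Q)=(A^2,B)$, giving $\Tr(A^2BA^2B)-\Tr(A^4B^2)=\tfrac12\Tr([A^2,B]^2)$; expanding $[A^2,B]=AC+CA$ and using cyclicity collapses the right-hand side to $\Tr(ACAC)+\Tr(A^2C^2)$. The term $\Tr(A^2C^2)$ is handled by the quadratic relation together with $\Tr(A^2C)=0$, yielding $(n-1)\Tr(A^2)$. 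The term $\Tr(ACAC)$ is the crux: substituting $C=M-I_n$ and using $MAM=\Tr(AM)\,M$ with $\Tr(AM)=\Tr(AC)+\Tr(A)=0$ gives $\Tr(ACAC)=-\Tr(A^2)$. Adding the two contributions produces $(n-2)\Tr(A^2)$, as claimed.

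Finally, \eqref{A^2BA^2B-A^4B^2} is not of ``square'' shape, so I would deduce it from the previous identity by a linear relation. Using $AB=BA+C$ one finds $\Tr(A^3BAB)-\Tr(A^4B^2)=\Tr(A^3BC)$, and a further expansion gives $\Tr(A^3BC)=\Tr(A^2BAC)+\Tr(A^2C^2)$, while $\Tr(A^2BAC)=\Tr(A^2BA^2B)-\Tr(A^3BAB)$. Setting $P\coloneqq\Tr(A^3BAB)-\Tr(A^4B^2)$ and $Q\coloneqq\Tr(A^2BA^2B)-\Tr(A^4B^2)$, these combine to the single linear relation $2P-Q=(n-1)\Tr(A^2)$; substituting the value $Q=(n-2)\Tr(A^2)$ from \eqref{A^3BAB-A^4B^2} yields $P=\tfrac{2n-3}{2}\Tr(A^2)$.

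The main obstacle is the evaluation of the mixed commutator trace $\Tr(ACAC)$, in which the two copies of $C$ are separated by a factor $A$. Attempting to exploit $C^2=(n-2)C+(n-1)I_n$ by commuting one $C$ past the intervening $A$ only reintroduces the same expression, so the naive reduction runs in a circle; it is precisely the rank-one identity $MNM=\Tr(NM)\,M$ that breaks this circularity. A secondary subtlety is that \eqref{A^2BA^2B-A^4B^2} does not follow from a single application of the formal square identity and must instead be extracted from the linear relation above once \eqref{A^3BAB-A^4B^2} is established.
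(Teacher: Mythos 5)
Your proposal is correct — every step checks out: the formal identity $\Tr(PQPQ)=\Tr(P^2Q^2)+\tfrac12\Tr([P,Q]^2)$, the quadratic relation $C^2=(n-2)C+(n-1)I_n$ for $C=[A,B]$, the rank-one evaluation $\Tr(ACAC)=-\Tr(A^2)$ via $MAM=\Tr(AM)M$, and the closing linear relation $2P-Q=(n-1)\Tr(A^2)$ — but it is organized quite differently from the paper's proof. The paper keeps the vector and covector explicit: it repeatedly multiplies the relation $AB-BA+I_n=vw$ by monomials in $A$ and $B$, takes traces, and reduces scalar quantities of the form $wA^kBv$, $wBA^3v$ by commuting $B$ past $A$ one step at a time, each swap costing a term in $I_n-vw$; in that scheme the identity for $\Tr(A^3BAB)$ is derived first and the one for $\Tr(A^2BA^2B)$ is deduced from it. You never split $vw$ into its factors: you package the Calogero--Moser condition into the matrix algebra of $M=C+I_n$ (namely $M^2=nM$ and $MNM=\Tr(NM)M$), prove the $\Tr(A^2BA^2B)$ identity first by applying the square identity to the pair $(A^2,B)$, and then recover the $\Tr(A^3BAB)$ identity from a linear relation — the reverse order of the paper. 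Your route buys conceptual clarity and reusability: the formal square identity and the rank-one calculus are stand-alone lemmas, and as a bonus you obtain a proof of the first identity, which the paper simply cites from~\cite{N}. The paper's route buys uniformity: its ``pay a small price'' reduction is a purely mechanical procedure that applies verbatim to any trace word, and it is the same technique the authors reuse elsewhere (e.g.\ in the identities feeding Proposition~\ref{maineq}), so no auxiliary structural lemmas about $C$ or $M$ need to be set up.
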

	
	\begin{proof}
		The first identity was already obtained in~\cite{N}, but we include it here for the sake of completion. Let us prove the second one.
		
		Consider a quadruple $(X, Y, v, w)$ as in~\eqref{vector-covector}. It is evident that the identity
		\begin{equation}\label{abvw}
			AB - BA + I_n = vw
		\end{equation}
		also holds, where $A$ and $B$ are the traceless versions of the matrices $X$ and $Y$, respectively. By taking traces we get that
		\[
		wv = n.
		\]
		Moreover, multiplying~\eqref{abvw} by $A^k$, we obtain that 
		\[
		\Tr(A^k)=wA^kv,
		\]
		for any integer $k$.
		In a similar fashion, we get
		\begin{equation*}
			A^3BAB-A^3B^2A+A^3B=A^3Bvw.
		\end{equation*}
		On one hand, taking traces on both sides yields:
		\begin{equation}\label{**}
			\Tr(A^3BAB)-\Tr(A^4B^2)+\Tr(A^3B)=wA^3Bv.
		\end{equation}
		On the other hand, we can operate as follows:
		\[
		A^3(AB+I_n-vw)B-A^3B^2A+A^3B=A^3Bvw,
		\] 
		which implies: 
		\begin{equation}\label{***}
			2\Tr(A^3B)-wBA^3v=wA^3Bv.
		\end{equation}
		Note that
		\begin{align*}
			wBA^3v&=w(AB+I_n-vw)A^2v\\
			&=wABA^2v+(1-n)wA^2v\\
			&=wA(AB+I_n-vw)Av+(1-n)wA^2v\\
			&=wA^2BAv+wA^2v-wAvwAv+(1-n)wA^2v\\
			&=wA^2BAv+(2-n)wA^2v\\ 
			&=wA^2(AB+I_n-vw)v+(2-n)wA^2v\\
			&=wA^3Bv+(3-2n)wA^2v\\
			&=wA^3Bv+(3-2n)\Tr(A^2).
		\end{align*}
		Therefore, the identity follows by equalising~\eqref{**} and~\eqref{***}.
		
		Similarly, multiplying~\eqref{abvw} by $A^2$ from the left side and by $AB$ from the right side, we obtain:
		\begin{equation}\label{*1}
			\Tr(A^3BAB)-\Tr(A^2BA^2B)+\Tr(A^3B)=wABA^2v.
		\end{equation}
		Note that
		\begin{equation*}
			\begin{aligned}
				wABA^2v&=wA(AB+I_n-vw)Av\\
				&=wA^2BAv+\Tr(A^2)\\
				&=wA^2(AB+I_n-vw)v+\Tr(A^2)\\
				&=wA^3Bv+wA^2v-wA^2vwv+\Tr(A^2)\\
				&=wA^3Bv+(2-n)\Tr(A^2)
			\end{aligned}
		\end{equation*}
		so using~\eqref{*1} together with~\eqref{A^3BAB-A^4B^2} we deduce the desired identity.
	\end{proof}

	\begin{proposition}\label{maineq}
		The polynomial
		\[
		r_1 = a_3(8+a_4^2-a_3a_5)-2(a_7^2-a_6a_8)+2(a_3a_{12}-2a_4a_{11}+a_5a_{10})
		\]
		defines a relation in the coordinate ring of $\C_4$.
	\end{proposition}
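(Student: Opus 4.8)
The plan is to polarise the traceless Cayley--Hamilton identity \eqref{tracelessCayleyHamilton}. Since $A$ and $B$ are both traceless, so is $M=A+tB$ for every scalar $t$, so \eqref{tracelessCayleyHamilton} holds for $M=A+tB$ as an identity of matrices depending polynomially on $t$; consequently the coefficient of each power of $t$ must vanish separately. I would extract the coefficient of $t^2$, which is the ``type $(2,2)$'' part of the identity. To do this one needs the $t$-expansions of $\Tr((A+tB)^2)$, $\Tr((A+tB)^3)$ and $\Tr((A+tB)^4)$; the first two are immediate from cyclicity of the trace, while for the $t^2$-coefficient of $\Tr((A+tB)^4)$ one must reduce the two words $ABAB$ and $BABA$ via \eqref{ABAB-A^2B^2}. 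This is precisely where the rank-one condition enters: for $n=4$ the correction $\Tr(ABAB)=\Tr(A^2B^2)+6$ produces a constant term, which is ultimately responsible for the summand $8a_3$ in $r_1$.

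Having obtained a matrix identity $Q=0$ whose entries are of type $(2,2)$ together with lower-order terms in $A,B$, I would pair it with $A^2$ under the trace, that is, compute $\Tr(Q\,A^2)=0$, in order to land in the type $(4,2)$ in which the top-degree part of $r_1$ lives. The only delicate contribution comes from the ``fourth-power part'': expanding the six type $(2,2)$ words and reducing each cyclically yields
\[
3\,\Tr(A^4B^2)+2\,\Tr(A^3BAB)+\Tr(A^2BA^2B),
\]
whereas every other term of $\Tr(Q\,A^2)$ is already a product of the generators $a_i$.

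Finally I would eliminate the noncanonical words: \eqref{A^2BA^2B-A^4B^2} and \eqref{A^3BAB-A^4B^2} reduce $\Tr(A^3BAB)$ and $\Tr(A^2BA^2B)$ to $\Tr(A^4B^2)$ plus multiples of $a_3$ (again feeding the rank-one correction into the $a_3$-terms), and then \eqref{A^4B^2} expresses $\Tr(A^4B^2)$ entirely through $a_3,a_5,a_6,a_8,a_{10},a_{12}$. Collecting terms produces a scalar multiple of $r_1$, which therefore vanishes on $\C_4$. I expect the main obstacle to be organisational rather than conceptual: the identity $r_1$ is \emph{not} a pure Cayley--Hamilton trace relation, so the whole point is to insert the commutator corrections from Lemma~\ref{A^4B^2_} at exactly the right places (the constant inside $\Tr((A+tB)^4)$, and the $a_3$-shifts of $\Tr(A^3BAB)$ and $\Tr(A^2BA^2B)$); keeping track of these alongside the cyclic reductions of the degree-six words is where care is needed. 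As a consistency check, the Fourier map \eqref{Fourier} sends $r_1$ to the analogous relation with the roles of $A$ and $B$ exchanged, matching what the same argument yields when $A^2$ is replaced by $B^2$.
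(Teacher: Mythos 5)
Your proposal is correct and is essentially the paper's own argument: both polarize the traceless Cayley--Hamilton identity \eqref{tracelessCayleyHamilton} for $M=A+tB$ at order $t^2$ and reduce the resulting bidegree-$(4,2)$ trace relation using \eqref{ABAB-A^2B^2}, Lemma~\ref{A^4B^2_} and \eqref{A^4B^2}. The only difference is the order of operations --- the paper first traces Cayley--Hamilton against $M^2$ to get the scalar identity \eqref{M^6} and then extracts the $t^2$-coefficient, producing $6\Tr(A^4B^2)+6\Tr(A^3BAB)+3\Tr(A^2BA^2B)$ and, after reduction, $\tfrac32 r_1$, whereas you extract the matrix coefficient $Q$ first and trace against the constant matrix $A^2$, producing $3\Tr(A^4B^2)+2\Tr(A^3BAB)+\Tr(A^2BA^2B)$ and $\tfrac12 r_1$ --- and both computations close correctly.
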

	
	\begin{proof}
		Let us consider the formal power expansion of $\Tr((A+tB)^6)$. The coefficient at $t^2$ is
		\[
		6\Tr(A^4B^2)+6\Tr(A^3BAB)+3\Tr(A^2BA^2B)
		\]
		which, making use of equations~\eqref{A^4B^2},~\eqref{A^2BA^2B-A^4B^2} and~\eqref{A^3BAB-A^4B^2}, together with Lemma~\ref{A^4B^2_}, is equal to
		\begin{equation}\label{r1_first}
			21a_3-\frac{5}{8}(3a_3^2a_5-8a_6a_8-12a_3a_{12}-6a_5a_{10}).
		\end{equation}
		
		On the other hand, applying the Cayley-Hamilton theorem~\eqref{M^6} first, we derive
		\begin{multline*}
			\Tr((A+tB)^6)=\frac34\Tr((A+tB)^4)\Tr((A+tB)^2)\\
			+\frac13\Tr^2((A+tB)^3)-\frac18\Tr^3((A+tB)^2).
		\end{multline*}
		After expanding the powers, gathering the coefficients, and further applying the already known substitutions together with equation~\eqref{ABAB-A^2B^2}, we spot that the coefficient at~$t^2$ can be written as 
		\begin{equation}\label{r1_second}
			-\frac{3}{8}  a_5 a_3^2-\frac{3}{2} a_4^2 a_3+\frac{3}{4} \left(6 a_{12}+12\right) a_3+3 a_7^2+2 a_6 a_8+\frac{3 a_5 a_{10}}{4}+6 a_4 a_{11}.
		\end{equation}
		
		Therefore, the expressions obtained in~\eqref{r1_first} and~\eqref{r1_second} must be equal in $\CCC$, resulting in the desired relation.
	\end{proof}

	
	\section{Full description}\label{section-main}
	
	In this section we shall focus on the Poisson structure of $\CCC$ which will allow us to produce new identities in a more algorithmic procedure, until we find the complete set of generators of the polynomial ideal of relations.
	
	We first want to recollect a set of identities obtained in~\cite[Lemmas~2.4 and~2.5]{N}, that will assist us to proceed forward.
	
	\begin{lemma}The following identities hold in $\CCC$:
		\begin{equation}
			\begin{aligned}
				\Tr(A^2BAB)={}&\Tr(A^3B^2),  \\
				\Tr(ABAB^2)={}&\Tr(A^2B^3),\\
				\Tr(A^2BAB^2)={}&\Tr(A^2B^2AB) - 8, \\
				\Tr((AB)^3)={}&\Tr(A^2B^2AB)+3a_4-4,\\
				\Tr(A^3B^3)={}&\Tr(A^2B^2AB)-2a_4-4, \label{listAsBs}\\
				\Tr(A^2B^3)={}&\frac1{12}( a_3a_9 +6a_4a_8 +3a_5a_7),\\
				\Tr(A^3B^2)={}& \frac1{12}(a_5a_6+6a_4a_7 +3a_3a_8),\\
				\Tr(A^3B^3)={}&\frac1{20}(-a_4^3+6a_7a_8+3a_5a_{11}+9a_4a_{12}+3a_3a_{13}+\frac23a_6a_9\\
				&-\frac32a_3a_4a_5-16a_4).
			\end{aligned}
		\end{equation}
		\noproof
	\end{lemma}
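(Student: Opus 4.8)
The plan is to split the eight identities into two families, each handled by one of the two techniques already displayed in the excerpt. The first five identities merely reorder the commutator inside a trace and carry no information beyond degree four, so I would prove them exactly as in Lemma~\ref{A^4B^2_}, working with a quadruple $(X,Y,v,w)$ and the rank-one relation \eqref{abvw}, that is $AB-BA+I_4=vw$. The three facts I would lean on are $wv=4$, the identity $\Tr(A^kB^l\cdots)=w(\cdots)v$ obtained by multiplying \eqref{abvw} by a word and tracing, and, most importantly, the tracelessness of $A$ and $B$, which gives $wAv=\Tr(A)=0$ and $wBv=\Tr(B)=0$.

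For a typical member, say $\Tr(A^2BAB)=\Tr(A^3B^2)$, I would replace the middle factor $BA$ by $AB+I_4-vw$, obtaining $\Tr(A^2BAB)=\Tr(A^3B^2)+\Tr(A^2B)-wBA^2v$; iterating the same substitution inside $wBA^2v$ pushes the $B$ to the right, and every correction produced along the way is a multiple of $wAv=0$, leaving $wBA^2v=\Tr(A^2B)$ and hence the stated equality. The constants $-8$, $3a_4-4$ and $-2a_4-4$ attached to the later identities arise in precisely the same bookkeeping, the $wv=4$ contractions producing the numerical shifts. These five identities are not independent: the words $\Tr(A^2BAB^2)$, $\Tr((AB)^3)$ and $\Tr(A^3B^3)$ are all most cleanly expressed through the common word $\Tr(A^2B^2AB)$, so I would establish them in that order.

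The remaining three identities evaluate the degree-five traces $\Tr(A^2B^3)$, $\Tr(A^3B^2)$ and the degree-six trace $\Tr(A^3B^3)$ in terms of the generators, and here I would polarize the Cayley-Hamilton theorem. Substituting $M=A+tB$ into \eqref{tracelessCayleyHamilton}, multiplying by $M$ and tracing gives $\Tr((A+tB)^5)=\tfrac56\Tr(M^2)\Tr(M^3)$, the scalar term dropping out because $\Tr(A+tB)=0$. Collapsing the ten length-five words in the $t^2$- and $t^3$-coefficients of the left-hand side to $10\Tr(A^3B^2)$ and $10\Tr(A^2B^3)$ by means of the first-family reorderings, and reading off the corresponding coefficients on the right, produces the factor $\tfrac1{12}=\tfrac1{10}\cdot\tfrac56$ and the two stated formulas at once. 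For $\Tr(A^3B^3)$ I would instead polarize \eqref{M^6} and extract the coefficient of $t^3$, exactly as in Proposition~\ref{maineq} but one degree higher: the direct side collapses, via the first-family reorderings, to a multiple of $\Tr(A^3B^3)$ plus lower terms, while the $\Tr((A+tB)^4)$-factor on the Cayley-Hamilton side is rewritten through the generators $a_{10},\dots,a_{14}$ using \eqref{ABAB-A^2B^2}; equating the two and solving gives the last identity.

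The main obstacle I anticipate is this final, degree-six identity: the coefficient of $t^3$ in $\Tr((A+tB)^6)$ spreads over twenty words of length six, and collapsing them demands careful, repeated use of the reorderings $\Tr(A^2BAB^2)=\Tr(A^2B^2AB)-8$, $\Tr((AB)^3)$, $\Tr(A^3B^3)$ together with \eqref{ABAB-A^2B^2}, each $wv$-contraction contributing a constant or $a_4$-term that must be tracked exactly. The saving grace, which I expect keeps the answer as clean as stated and explains the prefactor $\tfrac1{20}$, is that tracelessness forces $wAv=wBv=0$ at every step, annihilating the bulk of the would-be correction terms.
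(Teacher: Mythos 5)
The paper gives no proof of this lemma to compare against: it is imported verbatim from \cite[Lemmas~2.4 and~2.5]{N} and stated without proof. Your reconstruction is nevertheless correct in substance, and it uses exactly the two tools the paper itself deploys in Section~\ref{s:relation}: the rank-one identity \eqref{abvw} with the scalars $wv=4$, $wA^kv=\Tr(A^k)$ (the method of Lemma~\ref{A^4B^2_}), and polarization of the traceless Cayley--Hamilton identity (the method of Proposition~\ref{maineq}). I checked the Cayley--Hamilton half explicitly: multiplying \eqref{tracelessCayleyHamilton} by $M$ and tracing does give $\Tr(M^5)=\tfrac56\Tr(M^2)\Tr(M^3)$, whose $t^2$- and $t^3$-coefficients (after collapsing the ten length-five words via the first two reorderings) yield the two $\tfrac1{12}$-formulas; and the $t^3$-coefficient of \eqref{M^6} gives $20\Tr(A^3B^3)+34a_4$ on the direct side against $3a_5a_{11}+9a_4a_{12}+3a_3a_{13}+\tfrac23a_6a_9+6a_7a_8-a_4^3-\tfrac32a_3a_4a_5+18a_4$ on the other, reproducing precisely the stated constant $-16a_4$.

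One inner step of your first family is stated circularly, though the device you invoke repairs it. Pushing $B$ to the right in $wBA^2v$ only yields $wBA^2v=wABAv=wA^2Bv$ (these corrections do vanish, since $wAv=wBv=0$); it does not ``leave $\Tr(A^2B)$'', because $wA^2Bv=\Tr(A^2B)$ is not a formal consequence of \eqref{abvw} alone --- it is equivalent to the identity $\Tr(A^2BAB)=\Tr(A^3B^2)$ being proved. Indeed the extension of $wA^kv=\Tr(A^k)$ to mixed words fails in general: for instance $wABv=a_4+6$ and $wBAv=a_4-6$. The repair is the two-sided evaluation of Lemma~\ref{A^4B^2_} (its equations \eqref{**} and \eqref{***}): expanding $vw=AB-BA+I_4$ under the trace gives $wBA^2v=\Tr(A^3B^2)-\Tr(A^2BAB)+\Tr(A^2B)$ and $wA^2Bv=\Tr(A^2BAB)-\Tr(A^3B^2)+\Tr(A^2B)$, and equating these two expressions for the same scalar forces $2\Tr(A^2BAB)=2\Tr(A^3B^2)$. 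The same mechanism, now involving the nonzero scalars $wABv$ and $wBAv$ rather than only $wv=4$, is what actually produces the shifts $-8$, $3a_4-4$ and $-2a_4-4$ in the third through fifth identities; the resulting linear system in the words $\Tr(A^2BAB^2)$, $\Tr((AB)^3)$, $\Tr(A^3B^3)$, $\Tr(A^2B^2AB)$ closes, as you anticipate, when $\Tr(A^2B^2AB)$ is used as the pivot. With that correction your plan goes through exactly as outlined.
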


	A commutative $\CC$-algebra $R$ together with a Lie bracket $\{-,-\}$ is said to be a \emph{Poisson algebra} whenever the equation
	\[
	\{ a,bc \} = \{a,b\}c + b\{c,a\}
	\]
	is satisfied for all $a,b,c\in R$.
	
	Let $R = \CC\langle x, y \rangle$ be the free non-commutative associative $\CC$-algebra on two generators. It can be equipped with the bracket $\{-, -\}\colon R\times R\rightarrow R$ defined by 
	\begin{equation}
		\{u_1\cdots u_p, \ v_1\cdots v_q\}=\sum_{i=1}^{p}\sum_{j=1}^{q}\omega(u_i,v_j)u_{i+1}\cdots u_pu_1\cdots u_{i-1}v_{j+1}\cdots v_q v_1\cdots v_{j-1},
		\label{Kont}
	\end{equation}
	where each $u_1,\dots,u_p,v_1,\dots, v_q $ is either $x$ or $y$, and the bilinear form $\omega$ on $\CC x\oplus\CC y$ is defined by
	\[
	\omega(x,y)=-\omega(y,x)=1, \qquad \omega(x,x)=\omega(y,y)=0.
	\] 
	
	In~\cite{K} it is illustrated how this bracket endows $R$ with a Leibniz algebra structure and, furthermore, the following is known.
	
	\begin{proposition}[{\cite[Proposition 1.5]{G}}] 
		The linear space spanned by the elements of the form $ab - ba$ for all $a, b \in R$ is a central ideal of the Leibniz algebra $R$. Let us denote by $\LL$ the quotient of $R$ by this ideal. It is a Lie algebra and, furthermore, it induces a Poisson algebra structure on the free commutative algebra~$\Sym(\LL)$. \noproof
	\end{proposition}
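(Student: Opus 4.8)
The plan is to first isolate the combinatorial heart of the statement, namely the \emph{cyclic invariance} of the bracket \eqref{Kont} in each of its two arguments, and then to deduce everything else formally. Write $[R,R]$ for the linear span of all $ab-ba$. I would begin by proving the following claim: if $w$ and $w'$ are monomials that are cyclic rotations of one another, then $\{w,c\}=\{w',c\}$ and $\{c,w\}=\{c,w'\}$ for every monomial $c$. Inspecting \eqref{Kont}, fixing the index $i$ in the first word amounts to deleting the letter $u_i$ and reading the remaining letters cyclically starting from $u_{i+1}$; replacing $u_1\cdots u_p$ by its rotation $u_2\cdots u_p u_1$ merely reindexes the outer sum over $i$ and leaves each summand unchanged, and the identical remark applies to the inner sum over $j$. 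Since the difference between a monomial $ab$ and its rotation $ba$ is precisely a generator $ab-ba$ of $[R,R]$, cyclic invariance in the first (resp.\ second) argument is exactly the assertion $\{ab-ba,\,c\}=0$ (resp.\ $\{c,\,ab-ba\}=0$). Hence $[R,R]$ is annihilated by the bracket from both sides, which is the statement that it is a central ideal, centrality forcing the ideal condition trivially since $\{[R,R],R\}=\{R,[R,R]\}=0$.

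Next I would descend the bracket to $\LL$ and identify the resulting structure. Because $[R,R]$ is central, the bracket passes to a well-defined bilinear map on the quotient $\LL$. To see that this descended bracket is a Lie bracket, I would first check skew-symmetry directly on monomials: comparing the $(i,j)$-summand of $\{u_1\cdots u_p,\,v_1\cdots v_q\}$ with the $(j,i)$-summand of $\{v_1\cdots v_q,\,u_1\cdots u_p\}$, the scalar coefficients differ by the sign $\omega(v_j,u_i)=-\omega(u_i,v_j)$, while the two resulting words differ only by swapping the two concatenated blocks $u_{i+1}\cdots u_{i-1}$ and $v_{j+1}\cdots v_{j-1}$, that is, by a cyclic rotation. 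Modulo $[R,R]$ these two words coincide, so $\{m_1,m_2\}=-\{m_2,m_1\}$ in $\LL$. Granting the Leibniz identity on $R$ established in~\cite{K}, the quotient bracket on $\LL$ satisfies both the Leibniz identity and skew-symmetry; a Leibniz algebra with skew-symmetric bracket is exactly a Lie algebra, because under skew-symmetry the Leibniz identity collapses to the Jacobi identity. This yields the Lie algebra structure on $\LL$.

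Finally, the passage to $\Sym(\LL)$ is the classical Lie--Poisson (Kirillov--Kostant--Souriau) construction. For any Lie algebra $\LL$ there is a unique Poisson bracket on the symmetric algebra $\Sym(\LL)$ that restricts to the Lie bracket on the degree-one part $\LL\subset\Sym(\LL)$ and is a derivation in each argument, defined on products by forcing the Leibniz rule $\{a,bc\}=\{a,b\}c+b\{a,c\}$. Skew-symmetry and the biderivation property are immediate from the construction, and the Poisson Jacobi identity reduces, via the biderivation property, to its validity on the generators $\LL$, where it is nothing but the Jacobi identity of the Lie algebra just obtained. This completes the chain of implications.

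The step I expect to be the genuine obstacle is the bookkeeping in the cyclic-invariance claim and in the skew-symmetry computation: everything hinges on recognizing that deleting a matched letter and reading cyclically is insensitive to the chosen starting point of each word, and on the block-swap that identifies $u_{i+1}\cdots u_{i-1}v_{j+1}\cdots v_{j-1}$ with $v_{j+1}\cdots v_{j-1}u_{i+1}\cdots u_{i-1}$ up to rotation. Once these two combinatorial identities are pinned down, the algebraic consequences---centrality, descent, the Lie structure on $\LL$, and the Poisson structure on $\Sym(\LL)$---follow by formal and entirely standard arguments.
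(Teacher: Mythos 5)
Your argument is correct, but note that the paper itself supplies no proof of this statement: the proposition is imported verbatim from \cite[Proposition 1.5]{G} and closed with a \emph{qed} symbol, so what you have done is reconstruct the external argument rather than parallel an internal one. Your reconstruction is sound and is essentially the standard proof. The key combinatorial point --- that each summand of \eqref{Kont} deletes a matched letter and reads the remaining letters of that word cyclically, starting right after the deleted letter, and is therefore \emph{literally} unchanged (not merely changed modulo commutators) under cyclic rotation of either input --- is exactly what makes $[R,R]$ a two-sided kernel of the bracket, since by bilinearity $[R,R]$ is spanned by differences $w-w'$ of monomials that are cyclic rotations of one another; centrality and the ideal property then follow trivially, as you say. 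Your block-swap identity, giving $\omega(u_i,v_j)AB$ against $\omega(v_j,u_i)BA$ with $AB\equiv BA \pmod{[R,R]}$, correctly yields skew-symmetry of the induced bracket on $\LL$, and the collapse of the Leibniz identity to the Jacobi identity in the presence of skew-symmetry, followed by the Kirillov--Kostant--Souriau extension to $\Sym(\LL)$, is standard. Two steps are used as inputs rather than proved: the Leibniz identity on $R$, which you legitimately import from \cite{K} exactly as the paper does in the sentence preceding the proposition, and the well-definedness and uniqueness of the biderivation extension of a Lie bracket to the symmetric algebra, which is routine but asserted rather than checked. Neither affects correctness.
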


	This is an example of a Necklace Lie algebra canonically associated to a quiver, which allows us to define the following map (see \cite{G}):

	\begin{equation*}
		\begin{aligned}
			tr\colon \mathcal{L}&\rightarrow \CC[(M_n\times M_n)\git \GL_n]\\
			x^{k_1}y^{l_1}\cdots x^{k_m}y^{l_m}&\mapsto \big( (X,Y)\mapsto  \Tr(X^{k_1}Y^{l_1}\cdots X^{k_m}Y^{l_m})\big)
		\end{aligned}
	\end{equation*}
	endowing $\CC[(M_n\times M_n)\git \GL_n]$ with a Lie and, consequently, a Poisson algebra structure. As mentioned in the Introduction, the innate Poisson algebra structures of $\CCCn$ and $\CCVn$ are, in fact, induced by this Poisson bracket on $\CC[(M_n\times M_n)\git \GL_n]$. Note that the number of $X$ and~$Y$ on each generator induces a bigrading on this polynomial ring.

	To proceed, we shall compute some of the brackets among the generators of the algebra of invariants~$\CC[(M_n\times M_n) \git \GL_n]$ modulo the Calogero-Moser relations. It is worth noting that while the Poisson structure of the Necklace Lie algebra can be easily computed on explicit generators, it becomes a more challenging task when generic traceless matrices are used.
	
	By a direct computation we can verify that $\{a_1, a_2\}=4$ and that the Poisson bracket of~$\Tr(X)$ with any of $\Tr(A^2),\Tr(AB),\Tr(B^2)$ is zero. Then, by induction we can immediately establish that $\{\Tr(X),\Tr(w)\}=0$ for any word $w$ involving the letters $A$ or $B$. Hence, all the Poisson brackets involving $a_1$ or $a_2$ are trivial. 
	
	On top of the skew-symmetry, there is some inner symmetry occurring within the Lie bracket: whenever $x$ and $y$ are swapped in~\eqref{Kont}, the result of the bracket is the additive inverse with $x$ and $y$ swapped correspondingly. Since the trace of any word is preserved under cyclic permutations, there is a well-defined involution~$(-)^s$, that fixes $a_4$ and $a_{12}$, and
	\begin{eqnarray*}\label{symm}
		a_3^s=a_5, \ a_6^s=a_9, \ a_7^s=a_8, \, a_{10}^s=a_{14}, \ a_{11}^s=a_{13},
	\end{eqnarray*}
	thus, consequently:
	\begin{equation}\label{symm.Poisson}
		\{a_i^s,a_j^s\}=-\{a_i,a_j\}^s.
	\end{equation}
	
	\begin{lemma}[\cite{N}]\label{pq}
		For any $p,q\geq 2$ the following holds:
		\[
		\Big\{\Tr(A^p),\ \Tr(B^q)\Big\}=pq\Tr(A^{p-1}B^{q-1})-\frac{pq}n\Tr(A^{p-1})\Tr(B^{q-1}).
		\]
		\noproof
	\end{lemma}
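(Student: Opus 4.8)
The plan is to compute the Necklace Lie bracket $\{\Tr(A^p),\Tr(B^q)\}$ directly from the defining formula~\eqref{Kont}, and then to correct from the traceless matrices $A,B$ back to the generic matrices $X,Y$ that appear in the bracket. First I would work at the level of the free algebra, representing $\Tr(A^p)$ by the cyclic word $x^p$ (up to the correction below) and $\Tr(B^q)$ by $y^q$. Applying~\eqref{Kont} with $u_1=\dots=u_p=x$ and $v_1=\dots=v_q=y$, only the pairings where $\omega(u_i,v_j)=\omega(x,y)=1$ contribute, and there are $p\cdot q$ such terms. Each term produces, after the prescribed cyclic rearrangement $u_{i+1}\cdots u_p u_1\cdots u_{i-1}\, v_{j+1}\cdots v_q v_1\cdots v_{j-1}$, a word consisting of $p-1$ copies of $x$ followed by $q-1$ copies of $y$; passing to $\LL$ and taking cyclic equivalence, each such word is $x^{p-1}y^{q-1}$. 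Hence at the level of the trace map one expects a clean $pq\,\Tr(A^{p-1}B^{q-1})$.

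The subtlety, and the step I expect to be the main obstacle, is that the bracket in the statement is taken with respect to the \emph{generic} matrices $X,Y$, whereas the right-hand side is phrased in the \emph{traceless} matrices $A=X-\frac1n\Tr(X)I_n$ and $B=Y-\frac1n\Tr(Y)I_n$. Since $\Tr(A^p)$ and $\Tr(B^q)$ are polynomials in the trace generators, I would expand $\{\Tr(X^p),\Tr(Y^q)\}$ using the Procesi decomposition $\CC[\M_n^2]^{\GL_n}\cong\CC[u_1,u_2]\otimes\CC[\M_n(0)^2]^{\GL_n}$ from~\eqref{factor}, under which $u_1=\Tr(X)$, $u_2=\Tr(Y)$ Poisson-commute with everything in the traceless factor (as recorded in the remark that all brackets involving $a_1,a_2$ are trivial). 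Thus $\{\Tr(A^p),\Tr(B^q)\}$ equals the pure traceless bracket, and the computation reduces to the free-algebra calculation of the previous paragraph, but now with $x$ and $y$ standing for the \emph{traceless} generic matrices.

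The genuine work, then, is that the naive cyclic word $x^{p-1}y^{q-1}$ must be re-expressed in terms of the invariant generators, and here the traceless normalisation forces a correction term. Concretely, when reducing $\Tr(A^{p-1}B^{q-1})$ one must account for the trace part that was subtracted off in passing from $X,Y$ to $A,B$; this is exactly the source of the $-\frac{pq}{n}\Tr(A^{p-1})\Tr(B^{q-1})$ term. I would track this by writing the bracket bilinearly in $X=A+\frac1n\Tr(X)I_n$ and $Y=B+\frac1n\Tr(Y)I_n$, noting that the identity-matrix contributions to each pairing $\omega(u_i,v_j)$ collapse the corresponding word to a product $\Tr(A^{p-1})\Tr(B^{q-1})$ of two shorter traces, and that there are again $pq$ such terms scaled by the factor $\frac1n$ coming from the normalisation. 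Assembling the main term and this correction yields the stated formula. Since this lemma is attributed to~\cite{N}, I would cross-check the two extreme cases $p=q=2$ against the already-computed brackets among $a_3,a_4,a_5$ to confirm the coefficients.
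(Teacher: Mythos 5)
Your final formula is correct and your overall plan (a necklace main term plus a correction forced by the traceless normalisation) has the right shape; note that the paper itself offers no proof to compare against, since Lemma~\ref{pq} is quoted from~\cite{N} with the proof omitted. Judged on its own, your argument has one genuinely broken step: in your second paragraph you claim that, because $\Tr(X)$ and $\Tr(Y)$ Poisson-commute with all trace words in $A,B$, the computation ``reduces to the free-algebra calculation \dots with $x$ and $y$ standing for the traceless generic matrices.'' That inference is a non sequitur, and its conclusion is false. The commutation only shows that the subalgebra generated by trace words in $A,B$ is a Poisson subalgebra of $\CC[(\M_n\times \M_n)\git\GL_n]$; it does not say that the induced bracket on this subalgebra is computed by formula~\eqref{Kont} with the letters reinterpreted as $A$ and $B$. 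If it were, you would get exactly $pq\Tr(A^{p-1}B^{q-1})$ and the correction term $-\frac{pq}{n}\Tr(A^{p-1})\Tr(B^{q-1})$ would be absent --- the entire content of the lemma is precisely the failure of this naive substitution. Your third paragraph then reinstates the correction, but its mechanism (``identity-matrix contributions collapse the word \dots there are again $pq$ such terms scaled by $\frac1n$'') is asserted rather than derived, and as stated the bookkeeping is off: an honest expansion produces \emph{three} families of cross terms, not one, which only net to a single correction.

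The gap closes cleanly if you replace the hand-waving by the gradient computation that your heuristic is secretly describing. For the symplectic form~\eqref{symppair} one has $\{f,g\}=\Tr\bigl(\tfrac{\partial f}{\partial X}\tfrac{\partial g}{\partial Y}-\tfrac{\partial f}{\partial Y}\tfrac{\partial g}{\partial X}\bigr)$, and since $d\,\Tr(A^p)=p\Tr(A^{p-1}\,dA)$ with $dA=dX-\frac1n\Tr(dX)I_n$, the gradients are
\[
\frac{\partial\,\Tr(A^p)}{\partial X}=p\Bigl(A^{p-1}-\tfrac1n\Tr(A^{p-1})I_n\Bigr),\qquad
\frac{\partial\,\Tr(B^q)}{\partial Y}=q\Bigl(B^{q-1}-\tfrac1n\Tr(B^{q-1})I_n\Bigr),
\]
the other two vanishing. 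Hence
\[
\{\Tr(A^p),\Tr(B^q)\}=pq\,\Tr\Bigl[\bigl(A^{p-1}-\tfrac1n\Tr(A^{p-1})I_n\bigr)\bigl(B^{q-1}-\tfrac1n\Tr(B^{q-1})I_n\bigr)\Bigr],
\]
and expanding gives correction terms $-\frac1n-\frac1n+\frac1n=-\frac1n$ times $pq\,\Tr(A^{p-1})\Tr(B^{q-1})$, i.e.\ exactly the stated identity; here the subtraction of $\frac1n\Tr(\cdot)I_n$ in the gradients is the precise form of your ``identity-matrix contribution.'' Your consistency check ($p=q=2$ recovering $\{a_3,a_5\}=4a_4$) is fine, but it only confirms coefficients and cannot substitute for this missing derivation.
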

	
	This lemma allows us to directly compute the very first brackets:
	\begin{equation}
		\begin{aligned}\label{br1}
			\{a_3,a_5\} &=4a_4,\\
			\{a_3, a_9\}&=6a_8,\\
			\{a_3, a_{14}\}&=8a_{12},\\
			\{a_6, a_9\}&=9a_{12}-\frac94a_3a_5,\\
		\end{aligned}
	\end{equation}
	
	
	and, together with the corresponding substitutions from~\eqref{listAsBs}, we obtain
	\begin{equation} \label{br2}
		\begin{aligned}
			\{ a_6, a_{14}\}={}& 12\Tr(A^2B^3)-3\Tr(A^2)\Tr(B^3)\\
			={}&6a_4a_8+3a_5a_7-2a_3a_9,\\
			\{ a_{10}, a_{14}\}={}& 16\Tr(A^3B^3)-4\Tr(A^3)\Tr(B^3)\\
			={}&-\frac45a_4(16+\frac32a_3a_5+a_4^2)+\frac4{15}(18a_7a_8-13a_6a_9)\\
			&+\frac{12}5(a_5a_{11}+3a_4a_{12}+a_3a_{13}).\\
		\end{aligned}
	\end{equation}

	Our next strategy is to substitute the values of $A$ and $B$ in terms of $X$ and $Y$, compute the Poisson bracket using~\eqref{Kont} and, finally transform the obtained expressions back in terms of $A$ and $B$. For instance,
	\begin{equation}\label{br3.0}
		\begin{aligned}
			\{a_3,a_4\}&=\{\Tr(A^2),\Tr(AB)\}\\
			&=\{\Tr((X-\frac14\Tr(X)I_4)^2),\Tr((X-\frac14\Tr(X)I_4)(Y-\frac14\Tr(Y)I_4))\}\\
			&=\{\Tr(X^2)-\frac14\Tr^2(X), \Tr(XY)-\frac14\Tr(X)\Tr(Y)\}\\
			&=2\Tr(X^2)-\frac12\Tr^2(X)-\frac12\Tr^2(X)+\frac12\Tr^2(X)\\
			&=2(\Tr(X^2)-\frac14\Tr^2(X))=2\Tr(A^2)=2a_3.
		\end{aligned}
	\end{equation}
	
	In this way, we may obtain the following brackets:
	\begin{equation}\label{br3}
		\begin{aligned}
			\{a_3,a_7\}&=2a_6,\\
			\{a_3,a_8\}&=4a_7,\\
			\{a_3,a_{11}\}&=2a_{10},\\
			\{a_3,a_{12}\}&=4a_{11},\\
			\{a_3,a_{13}\}&=4\Tr(A^2B^2)+2\Tr(ABAB)=6a_{12}+12.
		\end{aligned}
	\end{equation}
	
	After a careful analysis of~\eqref{Kont} one can detect that the element $a_4$ is semi-simple and instantly obtain:  
	\begin{equation}\label{br4}
		\begin{aligned}    
			\{a_4, a_6\}&=-3a_6,\\
			\{a_4, a_7\}&=-a_7,\\
			\{a_4, a_{10}\}&=-4a_{10},\\
			\{a_4, a_{11}\}&=-2a_{11},\\
			\{a_4, a_{12}\}&=0.\\
		\end{aligned}
	\end{equation}

	Mixing all the approaches used above, together with~\eqref{ABAB-A^2B^2}--\eqref{listAsBs} we can assemble the following brackets:
	
	\begin{equation}\label{br5}
		\begin{aligned}
			\{a_6, a_7\}={}&3a_{10}-\frac34a_3^2,  \\
			\{a_6, a_8\}={}&6a_{11}-\frac32a_3a_4, \\ 		
			\{a_6, a_{11}\}={}&\frac74a_3a_6, \\
			\{a_6, a_{12}\}={}&2a_4a_6+\frac32a_3a_7,  \\
			\{a_6, a_{13}\}={}&\frac34a_5a_6+\frac92a_4a_7,  \\
			\{a_7, a_8\}={}&3a_{12}+12-a_4^2+\frac14a_3a_5,  \\ 
			\{a_7, a_{10}\}={}&-\frac73a_3a_6,  \\ 
			\{a_7, a_{11}\}={}&-\frac56a_4a_6+\frac14a_3a_7, \\
			\{a_7, a_{12}\}={}&a_3a_8+\frac16a_5a_6,  \\
			\{a_7, a_{13}\}={}&\frac23a_3a_9+a_4a_8+\frac54a_5a_7,  \\
			\{a_7, a_{14}\}={}&4a_5a_8+\frac23a_4a_9, \\
			\{a_{10}, a_{11}\}={}&-\frac12a_3^3+\frac13a_6^2+3a_3a_{10}, \\
			\{a_{10}, a_{12}\}={}&-a_3^2a_4+\frac23a_6a_7+4a_3a_{11}+2a_4a_{10}, \\
			\{a_{10}, a_{13}\}={}&10a_3-\frac32a_3^2a_5+a_6a_8+6a_3a_{12}+3a_5a_{10} \\
			\{a_{11}, a_{12}\}={}&9a_3-\frac12a_3^2a_5+\frac{11}6a_6a_8-\frac32a_7^2+2a_3a_{12}+a_5a_{10}. \\
			\{a_{11}, a_{13}\}={}&\frac15a_4(53-2a_4^2-3a_3a_5)+\frac65(3a_4a_{12}+a_3a_{13}+a_5a_{11}) \\
			&+\frac1{60}(9a_7a_8+31a_6a_9), 
		\end{aligned}
	\end{equation}
	
	In conclusion, equations~\eqref{br1}--\eqref{br5} together with the involution determined in~\eqref{symm.Poisson} complete the explicit multiplication table of the bracket on generators~\eqref{generators}. Note that while this bracket induces a non-associative antisymmetic algebra structure on the polynomial ring~$\CC[a_1, \dots, a_{14}]$, it is far from being a well-defined Poisson algebra structure. 
	Indeed, a computation of the following Jacobiator asserts that
	\[
	\{\{a_5,a_{10}\},a_{12}\}+\{\{a_{12},a_5\},a_{10}\}+\{\{a_{10},a_{12}\},a_5\}=8r_1,
	\]
	where $r_1$ is the relation obtained in Proposition~\ref{maineq}. Since we know that this Jacobiator must vanish in $\mathbb{C}[\mathcal{C}_4]$, this presents us an alternative way of proving Proposition~\ref{maineq}. To conclude the proof of the main result, we claim this relation is sufficient to entirely describe the coordinate ring $\CCC$. We also need the following remark.

	\begin{remark}\label{rem:free}
		The algebraic structure of both~$\CCC$ and~$\CCV$ is studied in~\cite{EG} (see in particular Propositions 4.15 (ii) and~ 6.10 (i)), where it is proven that they
		are free modules over ${\mathbb{C}[x_1,\dots,x_n]^{S_n}\otimes \mathbb{C}[y_1,\dots,y_n]^{S_n}}$ of rank $n!$.
		Since the Hilbert series of $\mathbb{C}[x_1,\dots,x_n]^{S_n}$ is known to be 
		\[
		\frac{1}{(1-t)(1-t^2)\cdots(1-t^n)}
		\]
		the Hilbert series of both $\CC[\mathcal{C}_n]$ and~$\CC[{\mathcal Com_n}]$ must be a rational function with the numerator consisting of a polynomial with positive coefficients adding up to $n!$ and the denominator being equal to $(1-t)^2(1-t^2)^2\cdots(1-t^n)^2$.
	\end{remark}

	\begin{theorem}\label{main_theorem}
		There is a Poisson algebra isomorphism 
		\[
		\CCC \cong \CC[a_1,a_2]\otimes \dfrac{\CC[a_3,\dots, a_{14}]}{I},
		\] 
		where $I$ is the non-associative ideal generated by $r_1$, and it is equal to the  polynomial ideal generated by the relations listed in Appendix~\ref{ap:CM}. 
		Moreover, the generators of~$\CCC$ as a free $\ring$-module are listed in Appendix~\ref{ap:free}.
	\end{theorem}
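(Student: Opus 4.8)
By the product decomposition~(\ref{factor}) it suffices to treat the traceless factor, i.e.\ to prove $I = I(\mathcal{C}_4)$ as ideals of $\CC[a_3,\dots,a_{14}]$, where $I(\mathcal{C}_4)$ is the ideal of all relations satisfied by the trace generators $a_3,\dots,a_{14}$ on the traceless factor of $\mathcal{C}_4$; write $\overline{N} \coloneqq \CC[a_3,\dots,a_{14}]/I(\mathcal{C}_4)$ and $N \coloneqq \CC[a_3,\dots,a_{14}]/I$. The inclusion $I \subseteq I(\mathcal{C}_4)$ is the easy half: by Proposition~\ref{maineq} we have $r_1 \in I(\mathcal{C}_4)$, and $I(\mathcal{C}_4)$ is stable under multiplication by $\CC[a_3,\dots,a_{14}]$ and under each operation $\{-,a_i\}$, because the bracket~(\ref{Kont}) descends to an honest Poisson bracket on $\CCC$, so bracketing a relation with any generator is again a relation. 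As $I$ is the smallest subspace containing $r_1$ and closed under these two operations, $I \subseteq I(\mathcal{C}_4)$, and we obtain a surjection $q\colon N \twoheadrightarrow \overline{N}$.

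To make $I$ explicit I would run the bracket closure of $r_1$: repeatedly apply $\{-,a_i\}$, evaluated through the multiplication table~(\ref{br1})--(\ref{br5}) and the involution~(\ref{symm.Poisson}), reducing each output modulo the relations already collected. Since each bigraded piece of $\CC[a_3,\dots,a_{14}]$ is finite-dimensional and the relations produced stay within a bounded range of bidegrees, the procedure terminates and outputs the finite list of Appendix~\ref{ap:CM}. Checking that bracketing any listed relation with any $a_i$ lands back in the polynomial ideal generated by the list then shows that the non-associative ideal $I$ coincides with that polynomial ideal.

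For the reverse inclusion $I(\mathcal{C}_4) \subseteq I$ --- the substantive step --- I would argue by a rank count. Put $S \coloneqq \CC[a_3,a_5,a_6,a_9,a_{10},a_{14}]$, the subring generated by the traceless power sums of $X$ and of $Y$; by Remark~\ref{rem:free} (combined with~(\ref{factor})) the ring $\overline{N}$ is a \emph{free} $S$-module of rank $24 = 4!$. Using the relations of Appendix~\ref{ap:CM}, I would exhibit a normal-form reduction proving that the $24$ monomials of Appendix~\ref{ap:free} \emph{generate} $N$ as an $S$-module, i.e.\ that modulo $I$ every monomial in $a_3,\dots,a_{14}$ is an $S$-linear combination of these $24$. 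Then the same $24$ elements generate $\overline{N}$, and a generating set of size equal to the rank of a free module over a commutative ring is automatically a basis, since a surjective endomorphism of a finitely generated module is an isomorphism. Hence the $S$-linear surjection $S^{24} \twoheadrightarrow N$ sending the standard basis to these monomials becomes an isomorphism after composition with $q$; it is therefore already injective, so $q$ is an isomorphism and $I = I(\mathcal{C}_4)$. The isomorphism respects the Poisson structures, the bracket on the quotient being the descent of~(\ref{Kont}).

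The crux, and the main obstacle, is this reverse inclusion: a priori the bracket closure of the single relation $r_1$ might only generate a proper sub-ideal of $I(\mathcal{C}_4)$, and the freeness in Remark~\ref{rem:free} is exactly what converts the problem into the finite, verifiable statement that $24$ explicit monomials span $N$ over $S$. The genuine labour lies in that reduction, where the hazards are a missed relation or a miscount of the module generators. As an independent safeguard I would match the Hilbert series of $N$ against the rational function forced by Remark~\ref{rem:free}, whose numerator has positive coefficients summing to $24$ over denominator $(1-t^2)^2(1-t^3)^2(1-t^4)^2$; since $r_1$ is inhomogeneous this must be read off the degree filtration, whose leading relation $r_1 - 8a_3$ is the defining relation of the commuting variety, so the same computation simultaneously sets up the companion statement for $\CCV$.
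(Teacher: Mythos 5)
Your proposal is correct and shares the overall skeleton of the paper's proof: both rest on the easy containment $I \subseteq I(\mathcal{C}_4)$ (stability of the relation ideal under the bracket, since the bracket descends to the honest Poisson bracket on $\CCC$), on the Etingof--Ginzburg freeness of rank $24$ recalled in Remark~\ref{rem:free}, and on a finite Gr\"obner-type verification involving the $24$ monomials of Appendix~\ref{ap:free}. The genuine difference lies in which half of the ``basis'' property is verified and whose freeness is exploited. The paper checks that the $24$ monomials are linearly \emph{independent} in $\CC[a_1,a_2]\otimes\CC[a_3,\dots,a_{14}]/I$ (they are normal forms with respect to a Gr\"obner basis of $I+(a_3,a_5,a_6,a_9,a_{10},a_{14})$), computes the Hilbert series of the quotient with \texttt{Macaulay2}, and matches it with that of the free module $M$; freeness of the quotient then turns the surjection onto $\CCC$ into an isomorphism of free $\ring$-modules of equal rank. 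You instead check that the $24$ monomials \emph{span} $N$ over $S$ and let the freeness of $\overline{N}$ do the rest via the surjective-endomorphism (determinant trick) argument. Your route is logically a bit leaner, as it dispenses with the Hilbert-series computation altogether; computationally, though, the spanning check is just the complementary half of the very same Gr\"obner computation (it amounts, via the graded Nakayama lemma, to showing that the standard monomials of $I+S_+$ are contained in the listed $24$). A second genuine difference: you generate the relation list by pure bracket closure, whereas the paper also applies the Fourier map~\eqref{Fourier} to produce $t_1,\dots,t_6$. Your variant has the merit that every listed relation is then manifestly a bracket descendant of $r_1$, so the theorem's claim that the non-associative ideal coincides with the polynomial ideal of Appendix~\ref{ap:CM} is transparent; in the paper this containment (that the Fourier images $t_i$ lie in the non-associative ideal) is left implicit. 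Two small corrections to your write-up: your termination argument for the closure procedure is not right---bracketing raises bidegree, so the outputs do \emph{not} stay in a bounded range; termination follows instead from Noetherianity of the ascending chain of polynomial ideals produced. And the spanning reduction must genuinely be organized over $S$ (e.g., via graded Nakayama applied to $N/S_+N$), not by unstructured term rewriting; once phrased that way it is exactly the finite check described above.
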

	
	\begin{proof}
		The outline of the proof is as follows: we shall strategically obtain a set of relations by using the Poisson structure and the Fourier map~\eqref{Fourier} belonging to the non-associative ideal generated by $r_1$. Then, we shall define an injective morphism of $\ring$-modules from the free module generated by the monomials listed in Appendix~\ref{ap:free} to the canonical module structure of~$\CC[a_1,a_2]\otimes \CC[a_3,\dots, a_{14}]/I$ and verify that their Hilbert series coincide.  Therefore, since $\CC[a_1,a_2]\otimes \CC[a_3,\dots, a_{14}]/I$ sits inside~$\CCC$ and they are both free of the same rank, they are isomorphic.
		
		First, let us introduce the following polynomials (whose exact expressions are written in the Appendix~\ref{ap:CM}): 
		\begin{equation*}
			\begin{aligned}
				r_2&=-6\{r_1,a_7\}, \\
				r_3&= \frac16\{r_2,a_5\}, \\
				r_4&=\frac12\{\{r_1, a_8\},a_6\}, \\
				r_5&=\frac12\{\{r_1, a_8\},a_9\},\\
				r_6&=3\{r_1,a_8\}.
			\end{aligned}
		\end{equation*}
		Applying the Fourier map, we automatically get $t_1, \dots, t_6$, respectively. Finally, we obtain
		\begin{equation*}
			\begin{aligned}
				s_1&=\frac14\{r_1,a_5\},\\
				s_2&= a_3t_1-\frac16\{r_{5}, a_3\},\\
				s_{3}&=  -\frac19\{r_2,a_9\}.
			\end{aligned}
		\end{equation*} 
		which are invariant under the Fourier map. Note that the coefficients placed in front of the brackets are there just to ease the notation of each corresponding relation.
		
		Let $I$ be the polynomial ideal generated by $r_1, \dots, r_5, t_1, \dots, t_4, s_1, \dots, s_3$. With the help of a computer algebra system as \texttt{Macaulay2}~\cite{Mac2} we find that 
		\begin{equation}\label{eq:Hilbert}
			\frac{1+T^{2}+2\,T^{3}+4\,T^{4}+2\,T^{5}+4\,T^{6}+2\,T^{7}+4\,T^{8}+2\,T^{9}+T^{10}+T^{12}}{\left(1-T^{4}\right)^{2}\left(1-T^{3}\right)^{2}\left(1-T^{2}\right)^{2}(1-T)^2}
		\end{equation}
		is the Hilbert series of $\CC[a_1, \dots, a_{14}]/I$, graded by the sum of its natural bigrading.

		We know that~$\CCC$ is a free $\ring$-module of rank~$24$, as mentioned in Remark~\ref{rem:free}. We denote by~$M$ the free~$\ring$-module generated by the elements of Appendix~\ref{ap:free}. 
		There is a canonical map defined by the composition
		\[
		M \to \CC[a_1, \dots, a_{14}] \to \CC[a_1,a_2]\otimes \dfrac{\CC[a_3,\dots, a_{14}]}{I}
		\]
		To verify that it is injective, we compute a Gröbner basis with respect to the weighted degree reverse lexicographical order of the ideal generated by $I$ together with $a_3, a_5, a_6, a_9, a_{10}, a_{14}$. Then, it is a matter of checking that the basis of generators listed in Appendix~\ref{ap:free} are normal forms with respect to the Gröbner basis.	
		Finally, an examination of the degrees shows that the Hilbert series of~$M$ is exactly the same as~\eqref{eq:Hilbert}.
	\end{proof}
	
	\begin{remark}
		Note that in the proof we compute three polynomials that are not used: $r_6$, $t_5$, $t_6$. The reason we did this is because they will be needed in the following section, and as a curiosity they also appear when computing a Gröbner basis of the ideal $I$ with respect to the weighted degree reverse lexicographical order.
		As a polynomial ideal, it is easy to check  that none of the $12$ polynomials generating $I$ can be removed. Moreover, $I$ forms a prime ideal inside $\CC[a_1, \dots, a_{14}]$ defining an algebraic variety of dimension $8$. 
	\end{remark}
	
	\begin{remark} 
		As a consequence of Theorem~\ref{main_theorem}, we can confirm that the Poisson bracket defined on generators~\eqref{br1}--\eqref{br5} modulo the ideal $I$ is a well-defined Poisson bracket on $\CCC$.
	\end{remark}

	\section{Invariant commuting variety}
	
	The full description of $\CCC$ grants us with the full description of the coordinate ring of the invariant commuting variety of $4 \times 4$~matrices, as an almost immediate consequence.
	When we obtained the equations in Section~\ref{s:relation}, whenever we wanted to commute a pair of matrices, a ``small price'' had to be paid, that manifests as a term of a lower degree. In $\CCV$, these extra terms do not occur.
	
	\begin{theorem}
		The coordinate ring of the commuting variety of $4 \times 4$~matrices is isomorphic to $\CC[a_1, a_2] \otimes \CC[a_3, \dots, a_{14}]/J$, where $J$ is the ideal generated by the polynomials listed in the Appendix~\ref{ap:CV}.
	\end{theorem}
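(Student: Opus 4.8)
The plan is to rerun the argument of Theorem~\ref{main_theorem} in the commuting setting, exploiting the one structural difference between the two varieties. Every identity derived in Section~\ref{s:relation} for $\C_4$ required commuting a pair of matrices at the ``small price'' of a lower-degree correction coming from the $+I_n$ in \eqref{vector-covector}; in $\mathrm{Com}_4$ one has $[A,B]=0$ identically, so the trace of any word depends only on its number of $A$'s and $B$'s and all such corrections vanish. I therefore expect each generator of $J$ to be exactly the top-degree homogeneous part of the corresponding generator of $I$, and the whole verification to be a direct transcription of the Calogero--Moser computation.

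First I would re-derive the generating relation. Repeating the Cayley--Hamilton computation of Proposition~\ref{maineq} under $[A,B]=0$, the commuting analogues of Lemma~\ref{A^4B^2_} read $\Tr(ABAB)=\Tr(A^2B^2)$ and $\Tr(A^3BAB)=\Tr(A^2BA^2B)=\Tr(A^4B^2)$, with no lower-order terms; carrying these through the expansion of $\Tr((A+tB)^6)$ as before yields $r_1-8a_3$ as a defining relation of $\CCV$, matching the generator singled out in the Introduction. Next I would recompute the bracket table: the necklace bracket \eqref{Kont} restricts to $\CCV$, and since the commuting versions of \eqref{listAsBs} lose their constant and $a_4$-summands, every bracket in \eqref{br1}--\eqref{br5} survives after deleting its lower-degree correction, e.g. $\{a_3,a_{13}\}=6a_{12}$ and $\{a_7,a_8\}=3a_{12}-a_4^2+\tfrac14 a_3a_5$.

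I would then apply to $r_1-8a_3$ the same sequence of bracket operations and Fourier-map \eqref{Fourier} images used to manufacture $r_1,\dots,s_3$ from $r_1$; each output is the leading part of its $\C_4$ counterpart and assembles into the homogeneous list of Appendix~\ref{ap:CV}. To certify completeness I would argue as in Theorem~\ref{main_theorem}: by Remark~\ref{rem:free}, $\CCV$ is a free module of rank $4!=24$ over $\CC[x_1,\dots,x_4]^{S_4}\otimes\CC[y_1,\dots,y_4]^{S_4}$, forcing its Hilbert series to have denominator $(1-T)^2(1-T^2)^2(1-T^3)^2(1-T^4)^2$ and numerator of positive coefficients summing to $24$. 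A \texttt{Macaulay2} check should confirm that $\CC[a_3,\dots,a_{14}]/J$ realises the series \eqref{eq:Hilbert}; then the canonical map from the free module on the generators of Appendix~\ref{ap:free} into $\CC[a_1,a_2]\otimes\CC[a_3,\dots,a_{14}]/J$ is injective by a Gröbner normal-form computation, and equality of ranks together with equality of Hilbert series yields the isomorphism.

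The main obstacle I anticipate is not deriving the individual relations --- these are forced by setting the ``price'' to zero --- but verifying that the homogeneous generators so obtained cut out the whole prime ideal $I({\mathcal Com}_4)$ rather than a proper subideal. Conceptually the clean statement is $J=\operatorname{gr}(I)$, the associated graded ideal of $I$ for the total-degree filtration, which would make the Hilbert series of $\CCV$ and $\CCC$ coincide automatically; but this requires the generating set $\{r_1,\dots,s_3\}$ of $I$ to be a standard basis, which is exactly why the auxiliary polynomials $r_6,t_5,t_6$ arising in the Gröbner basis of $I$ were recorded. I would settle the point by the computational match of the Hilbert series against the rank-$24$ constraint, which simultaneously confirms that $J$ is prime and that $\CC[a_1,a_2]\otimes\CC[a_3,\dots,a_{14}]/J$ is the coordinate ring of the $8$-dimensional variety ${\mathcal Com}_4\cong S^4\CC^2$.
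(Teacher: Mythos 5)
Your proposal is correct and follows essentially the same route as the paper: the generators of $J$ are exactly the top-degree homogeneous parts of the generators of $I$, and completeness is certified by the identical argument from the end of the proof of Theorem~\ref{main_theorem} (Hilbert series computed in \texttt{Macaulay2}, injectivity of the map from the free module on the monomials of Appendix~\ref{ap:free} via Gr\"obner normal forms, and the rank-$24$ freeness from Remark~\ref{rem:free}). The only minor difference is in how the list of Appendix~\ref{ap:CV} is produced: the paper truncates the lower-degree terms from a Gr\"obner basis of $I$ and then checks directly that the truncations vanish on ${\mathcal Com}_4$, whereas you re-derive them by running the Poisson-bracket machinery inside $\CCV$ starting from $r_1-8a_3$ --- both are routine computations yielding the same set of homogeneous relations.
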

	
	\begin{proof}
		The method we use to find the relations is the following: We compute a Gröbner basis of $I$ (from the Theorem~\ref{main_theorem}) with respect to the weighted degree reverse lexicographical order, obtaining the $15$ polynomials listed in Appendix~\ref{ap:CM}. Then, for each non-homogeneous polynomial we remove the lower degree terms, obtaining a set of homogeneous polynomials, that are listed in Appendix~\ref{ap:CV}.
		
		It is a straightforward computation to check that all the polynomials obtained are relations in the coordinate ring of the invariant commuting variety. Then, to finish we replicate the final part of the proof of Theorem~\ref{main_theorem}.
	\end{proof}
	
	\section{On the Grothendieck Ring}
	
	In this section we compute the class in the Grothendieck ring of both the Calogero-Moser space and the invariant commuting variety. To do so, we will outline the computations done for the invariant commuting variety, which can be done in a similar fashion for the Calogero-Moser space.
	
	Recall that the Grothendieck ring in the category of quasi-projective complex varieties is the abelian group generated by isomorphism classes of quasi-projective varieties $[V]$ modulo the relation $[V] = [V_1] + [V_2]$, whenever $V$ can be decomposed as a disjoint union of a closed set $V_1$ and an open set $V_2$. Note that it forms a ring with its naturally defined product $[U] \cdot [V] = [U \times V]$. We denote the \emph{Lefschetz object} by $\LLL = \CC$.
	
	\begin{theorem}
		The class of the invariant commuting variety of a pair of $4\times 4$ matrices in the Grothendieck ring is $\LLL^8$.
	\end{theorem}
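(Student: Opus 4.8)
The plan is to bypass the explicit presentation via the ideal $J$ and instead exploit the isomorphism ${\mathcal Com}_n \cong S^n\CC^2$ recorded above (from~\cite{D,GG,Va}), which identifies the invariant commuting variety with the $n$-th symmetric power of the affine plane. For $n=4$ this gives $[{\mathcal Com}_4] = [S^4\CC^2]$ in $K_0({\mathcal Var}_\C)$, so it suffices to show $[S^4\CC^2] = \LLL^8$. More generally I would establish $[S^n\CC^2] = \LLL^{2n}$ and then specialise to $n=4$.

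To compute $[S^n\CC^2]$ I would use the projection $p\colon S^n\CC^2 \to S^n\CC$ induced by the first-coordinate map $\CC^2 \to \CC$, which sends a multiset of $n$ points of the plane to the multiset of its $x$-coordinates. The base is understood through the classical isomorphism $S^n\CC \cong \CC^n$ given by elementary symmetric functions, so $[S^n\CC] = \LLL^n$; in particular $[S^4\CC] = \LLL^4$. I would then stratify the base $S^n\CC$ by \emph{partition type}: for each partition $\lambda$ of $n$ let $U_\lambda \subset S^n\CC$ be the locally closed subset of multisets whose distinct values occur with the multiplicities prescribed by $\lambda$. These strata are disjoint and cover $S^n\CC$, whence $\sum_\lambda [U_\lambda] = [S^n\CC] = \LLL^n$.

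The crucial local computation is that over a fixed multiset $x_1^{n_1}\cdots x_r^{n_r} \in U_\lambda$ (with the $x_i$ distinct and $\sum n_i = n$), the fibre of $p$ consists of all ways of assigning to each repeated value $x_i$ a multiset of $n_i$ vertical coordinates; hence the fibre is $\prod_{i=1}^r S^{n_i}\CC \cong \CC^{\,n_1+\cdots+n_r} = \CC^n$, of class $\LLL^n$ independently of $\lambda$. I would then argue that $p^{-1}(U_\lambda) \to U_\lambda$ is a Zariski-locally trivial fibration with fibre $\CC^n$: after passing to an ordering of the distinct points it becomes a trivial product over the ordered base, and the quotient by the residual finite symmetry permuting equal-multiplicity blocks is the rank-$n$ vector bundle over $U_\lambda$ associated to the corresponding permutation representation, which is automatically Zariski-locally trivial. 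Consequently $[p^{-1}(U_\lambda)] = [U_\lambda]\,\LLL^n$, and summing over $\lambda$ gives
\[
[S^n\CC^2] = \sum_\lambda [p^{-1}(U_\lambda)] = \LLL^n\sum_\lambda[U_\lambda] = \LLL^n \cdot \LLL^n = \LLL^{2n}.
\]
Setting $n=4$ yields $[{\mathcal Com}_4] = \LLL^8$.

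The step I expect to be the main obstacle is precisely the local-triviality claim: to legitimately factor $[p^{-1}(U_\lambda)] = [U_\lambda]\,\LLL^n$ one must know the fibration is Zariski-locally trivial rather than merely a fibration with constant fibre class, since $K_0({\mathcal Var}_\C)$ does not detect the latter in general. The cleanest way around this is to exhibit $p^{-1}(U_\lambda)$ as the total space of a genuine vector bundle, as above, so that local triviality is built in. Alternatively one can invoke the scaling property of the Kapranov motivic zeta function, $\zeta_{\CC\times X}(t) = \zeta_X(\LLL t)$, applied to $X = \CC$, which encodes exactly the identity $[S^n(\CC\times X)] = \LLL^n[S^n X]$ and reduces the entire computation to $[S^n\CC] = \LLL^n$.
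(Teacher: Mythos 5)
Your proof is correct, and it takes a genuinely different route from the paper's. The paper never leaves its own presentation: it writes $[{\mathcal Com}_4]=\LLL^2\cdot[\Var(J)]$, stratifies $\Var(J)$ according to the vanishing of the polynomial $w_1$ (the discriminant of the first matrix, up to a factor of $72$) and of two auxiliary polynomials $w_{21},w_{22}$ in $a_3,a_6,a_{10}$, solves the defining equations on each stratum with Gr\"obner-basis assistance so that every stratum contributes $\LLL^3$ times a locally closed piece of the $(a_3,a_6,a_{10})$-space, and reassembles those pieces into $[\Var(J)]=\LLL^3\cdot\LLL^3=\LLL^6$. You bypass the presentation entirely, invoking instead the isomorphism ${\mathcal Com}_n\cong S^n\CC^2$ of \cite{D,GG,Va}, which the paper records around \eqref{VacDom}, and computing $[S^n\CC^2]=\LLL^{2n}$ by fibring over $S^n\CC\cong\CC^n$ via the projection $p$ and stratifying by partition type. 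Your treatment of the one delicate point is right: constancy of the fibre class alone proves nothing in the Grothendieck ring, but over each stratum $U_\lambda$ the preimage is $(\tilde U_\lambda\times\prod_i S^{n_i}\CC)/G_\lambda$, where the group $G_\lambda$ permuting equal-multiplicity blocks acts freely on the space $\tilde U_\lambda$ of ordered distinct supports and linearly on the fibre $\prod_i S^{n_i}\CC\cong\CC^n$ (block permutations, after passing to elementary symmetric coordinates); being \'etale-locally trivial, this rank-$n$ bundle is Zariski-locally trivial by Hilbert 90, so $[p^{-1}(U_\lambda)]=\LLL^n[U_\lambda]$ holds legitimately. (Quoting the zeta-function identity $\zeta_{\CC\times X}(t)=\zeta_X(\LLL t)$ of Totaro and G\"ottsche is an equally valid shortcut.) As for what each approach buys: yours is computer-free, works uniformly in $n$, and in fact establishes the commuting-variety half of the paper's closing conjecture, $[{\mathcal Com}_n]=\LLL^{2n}$ for all $n$; its limitation is that it is special to ${\mathcal Com}_n$, since the Calogero-Moser space admits no such symmetric-power description. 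The paper's computation, though heavy and tied to $n=4$, is deliberately an application of its main theorem -- it runs entirely off the new presentation -- and the same stratification technique is what handles $[\mathcal{C}_4]$, where your shortcut is unavailable.
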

	\begin{proof}
		One of the standard ways to compute this invariant is to proceed by divide and conquer. Firstly, since $\mathbb{C}[{\mathcal Com_4}] = \CC[a_1, a_2] \otimes \CC[a_3, \dots, a_{14}]/J$, it is clear that 
		\[
		[{\mathcal Com_4}] = \LLL^2 \cdot [\Var(J)]
		\]
		
		Let us denote by $\widetilde{r_i}$, $\widetilde{s_i}$ and $\widetilde{t_i}$ the corresponding identities generating $J$ from Appendix~\ref{ap:CV}. We make the following transformations in order to ease the computations:
		\begin{align*}
			\widetilde{r_4}' &= -\dfrac{1}{2}(\widetilde{r_4} + 3a_3\widetilde{r_1}) \\
			\widetilde{r_5}' &= \widetilde{r_5} + 3a_4\widetilde{t_1} \\
			\widetilde{r_6}' &= \dfrac{1}{2}(\widetilde{r_3} + \widetilde{r_6}) \\
			\widetilde{t_6}' &= \dfrac{1}{2}(\widetilde{t_6} - 7\widetilde{t_3}) \\
			\widetilde{s_2}' &= -\dfrac{1}{2}(\widetilde{s_2} + \widetilde{s_3})
		\end{align*}
		Assisted by Gröbner bases it is immediate to check that, substituting the \emph{prime} versions of the relations by the original ones, we still generate the same ideal. 
		Solving directly the equations for the variables $a_5$, $a_8$, $a_9$, $a_{12}$, $a_{13}$, $a_{14}$, we get a complete solution where the values of $a_4$, $a_7$ and $a_{11}$ are free, and the following polynomial
		\[
		{w_1} = 288 a_{10}^3 - 288 a_{10}^2 a_3^2 + 90 a_{10} a_3^4 - 9 a_3^6 - 144 a_{10} a_3 a_6^2 + 68 a_3^3 a_6^2 + 24 a_6^4
		\]
		appears in the denominators of the solutions. Note that this polynomial involves only the variables $a_3$, $a_6$ and $a_{10}$, which correspond to the traces of the powers of the traceless form of the first matrix $X$ in the pair.		
		Furthermore, it is exactly 
		\[
		72 \prod_{1\leq i< j\leq 4} (\lambda_i-\lambda_j)^2,
		\]
		where $\lambda_1,\dots, \lambda_4$ are the eigenvalues of~$X$.
		
		We denote by $V_{{w_1}}$ the subvariety $\Var(J \cup \{{w_1}\})$ and by $U_{w_1}$ the open subset of~$\Var(J)$ where ${w_1} \neq 0$ (i.e., $X$ admits a simple spectrum), then
		\begin{equation*}
			[U_{w_1}] = \LLL^3 \cdot [\{ (a_3, a_6, a_{10}) \in \CC^3 \mid {w_1} \neq 0 \}].
		\end{equation*}
		
		In order to compute the class of $V_{w_1}$, we add ${w_1}$ to the set of equations, and we can now solve directly the system whenever $a_6 (-4a_{10} a_3 + a_3^3 + 4 a_6^2) \neq 0$. Let us denote:
		\begin{align*}
			w_{21} &= a_6 \\
			w_{22} &= -4a_{10} a_3 + a_3^3 + 4 a_6^2.
		\end{align*}
		Note that, as in $w_1$, only the variables $a_3$, $a_6$ and $a_{10}$ appear in the polynomials~$w_{21}$ and $w_{22}$.
		
		Therefore, $V_{w_1}$ can be decomposed as the union of $V_{1}$, $V_{21}$, $V_{22}$, $V_{23}$, where
		\begin{align*}
			&V_{1} \text{ is the open subset of }V_{w_1} \text{ where } w_{21}w_{22} \neq 0, \\
			&V_{2} \text{ is the subset of }V_{w_1} \text{ where } w_{21} = 0 \text{ and } w_{22} \neq 0, \\
			&V_{3} \text{ is the subset of }V_{w_1} \text{ where } w_{21} \neq 0\text{ and } w_{22} = 0, \\
			&V_{4} \text{ is the closed subset of }V_{w_1} \text{ where } w_{21} = 0\text{ and } w_{22} = 0.
		\end{align*}
		Solving directly the equations, we can find the following:
		\begin{itemize}
			\item In $V_1$, the variables $a_4$, $a_8$ and $a_{11}$ are free, while $a_5$, $a_7$, $a_9$, $a_{12}$, $a_{13}$, $a_{14}$ are completely determined.
			\item In $V_{2}$, the variables $a_4$, $a_5$ and $a_{7}$ are free, while the variables $a_{8}$, $a_{9}$, $a_{11}$, $a_{12}$, $a_{13}$, $a_{14}$ are completely determined.
			\item In $V_{3}$, the variables $a_4$, $a_8$ and $a_{11}$ are free, while the variables $a_{5}$, $a_{7}$, $a_{11}$, $a_{12}$, $a_{13}$, $a_{14}$ are completely determined.
			\item In $V_{4}$, the variables $a_4$, $a_5$ and $a_{8}$ are free, while the variables $a_{7}$, $a_{9}$, $a_{11}$, $a_{12}$, $a_{13}$, $a_{14}$ are completely determined.		
		\end{itemize}
		Therefore, 
		\begin{align*}
			[V_{1}]  &= \LLL^3 \cdot [\{(a_3, a_6, a_{10}) \in \CC^3 \mid w_1 = 0, w_{21}w_{22} \neq 0 \}], \\
			[V_{2}]  &= \LLL^3 \cdot [\{(a_3, a_6, a_{10}) \in \CC^3 \mid w_1 = 0, w_{21} = 0, w_{22} \neq 0 \}], \\
			[V_{3}]  &= \LLL^3 \cdot [\{(a_3, a_6, a_{10}) \in \CC^3 \mid w_1 = 0, w_{21} \neq 0, w_{22} = 0 \}], \\
			[V_{4}]  &= \LLL^3 \cdot [\{(a_3, a_6, a_{10}) \in \CC^3 \mid w_1 = 0, w_{21} = 0, w_{22} = 0 \}].
		\end{align*}
		Factoring out in
		\[
		[\Var(J)] = [U_{w_1}] + [V_1] + [V_2] + [V_3] + [V_4],
		\]
		we realize that the sum of all the classes left to find is exactly the class of the union of a decomposition of $\CC^3$, obtaining
		\[
		[\Var(J)] = \LLL^3 \cdot \LLL^3 = \LLL^6,
		\]
		and, consequently, $[{\mathcal Com_4}] = \LLL^8$.
	\end{proof}

	\begin{theorem} 
		The class of the fourth Calogero-Moser space  in the Grothendieck ring is $\LLL^8 - \LLL^7 + 2 \LLL^6-\LLL^5$. \noproof
	\end{theorem}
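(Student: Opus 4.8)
The plan is to run the same divide-and-conquer stratification used for ${\mathcal Com}_4$, now with the ideal $I$ of Theorem~\ref{main_theorem} in place of $J$. Since the factor $\CC[a_1,a_2]$ again contributes $\LLL^2$, it suffices to compute $[\Var(I)]$ and set $[\mathcal{C}_4]=\LLL^2\cdot[\Var(I)]$. The crucial observation is that the discriminant $w_1$ introduced in the commuting case involves only $a_3,a_6,a_{10}=\Tr(A^2),\Tr(A^3),\Tr(A^4)$, which depend on the first matrix alone; as a polynomial in these three variables it is, up to the constant $72$, the discriminant $\prod_{i<j}(\lambda_i-\lambda_j)^2$ of the eigenvalues $\lambda_i$ of $X$, and is therefore completely unaffected by the inhomogeneous ``small price'' terms distinguishing $I$ from $J$. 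Moreover, the generic elimination of $a_5,a_8,a_9,a_{12},a_{13},a_{14}$ in favour of the three free parameters $a_4,a_7,a_{11}$ is governed by the top-degree parts of the relations, which are exactly the generators of $J$; hence over the simple-spectrum open locus $\{w_1\neq0\}$ the same six variables are determined (now by corrected, but still regular, Cramer expressions with denominator $w_1$), the fibre is an affine $3$-space, and
\[
[U_{w_1}]=\LLL^3\cdot[\{(a_3,a_6,a_{10})\in\CC^3\mid w_1\neq0\}],
\]
verbatim as for ${\mathcal Com}_4$.

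Let $D=\{w_1=0\}\subset\CC^3$ be the discriminant locus. A short configuration-space count (the complement being the space of centred squarefree quartics) gives $[D]=\LLL^2$, so $[\{w_1\neq0\}]=\LLL^3-\LLL^2$ and $[U_{w_1}]=\LLL^6-\LLL^5$, again identical to the commuting case. All of the difference between $[\mathcal{C}_4]$ and $[{\mathcal Com}_4]$ is therefore concentrated on $V_{w_1}=\Var(I\cup\{w_1\})$, where $X$ has a repeated eigenvalue. Here the deformation is genuinely felt: a direct left/right eigenvector computation with $[X,Y]+I_4=vw$ shows that for $u$ a right and $\phi$ a left eigenvector sharing an eigenvalue one has $(\phi v)(wu)=\phi u$, so a \emph{diagonalisable} $X$ cannot have a repeated eigenvalue on $\mathcal{C}_4$ (a nondegenerate $m\times m$ pairing cannot equal a rank-one form for $m\ge 2$). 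Thus along $D$ the admissible matrices $X$ are forced into nontrivial Jordan form, the fibre dimension over $D$ drops from $3$ to $2$, and the would-be free parameters are cut down to tori, which is the source of the alternating signs.

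To make this precise I would reuse the auxiliary polynomials $w_{21}=a_6$ and $w_{22}=-4a_{10}a_3+a_3^3+4a_6^2$, decompose $V_{w_1}$ into the same four strata $V_1,\dots,V_4$, and solve the deformed system of $I$ on each with the assistance of Gröbner bases. Carrying out this solution should yield
\[
[V_{w_1}]=2\LLL^4-\LLL^3,
\]
in contrast with the value $\LLL^3\cdot[D]=\LLL^5$ obtained for $J$; the deficit is exactly $\LLL^5-(2\LLL^4-\LLL^3)=\LLL^3(\LLL-1)^2$. Assembling
\[
[\Var(I)]=[U_{w_1}]+[V_{w_1}]=(\LLL^6-\LLL^5)+(2\LLL^4-\LLL^3)=\LLL^6-\LLL^5+2\LLL^4-\LLL^3,
\]
and multiplying by $\LLL^2$ gives the claimed class $[\mathcal{C}_4]=\LLL^8-\LLL^7+2\LLL^6-\LLL^5$.

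I expect the main obstacle to be precisely the analysis over the discriminant locus $V_{w_1}$. Away from it the argument transcribes the commuting case word for word, but once the eigenvalues of $X$ collide the inhomogeneous relations couple the first-matrix data $a_3,a_6,a_{10}$ to the remaining invariants in a way with no counterpart for $J$; the solutions then acquire invertibility constraints (hence the $(\LLL-1)$ factors) and may split into several branches (hence the coefficient $2$). Establishing that the four strata are complete and reduced, and that the corrected fibres sum to $2\LLL^4-\LLL^3$, is the computational heart of the proof and is where a careful Gröbner-basis verification is indispensable.
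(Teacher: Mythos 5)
Your proposal takes essentially the same route the paper intends: the paper states this theorem without proof, having remarked that the stratification carried out for ${\mathcal Com}_4$ ``can be done in a similar fashion'' for $\mathcal{C}_4$, and your plan --- factor out $\LLL^2$, cut $\Var(I)$ along the discriminant $w_1$ of the first matrix, and solve for $a_5,a_8,a_9,a_{12},a_{13},a_{14}$ stratum by stratum with Gr\"obner bases --- is exactly that. The ingredients you actually verify are correct: the fibre over $\{w_1\neq 0\}$ is $\CC^3$ (besides your leading-term heuristic, this follows cleanly from $[X,Y]+I_4=vw$, since for simple-spectrum $X$ one may diagonalize and normalize $v=(1,1,1,1)^T$, $w=(1,1,1,1)$, forcing the off-diagonal entries of $Y$ to be $1/(\lambda_i-\lambda_j)$ with only the traceless diagonal of $Y$ free, so $(a_4,a_7,a_{11})$ coordinatize the fibre by Vandermonde), $[D]=\LLL^2$ for centred squarefree quartics, and the left/right-eigenvector argument excluding diagonalizable $X$ with repeated spectrum on $\mathcal{C}_4$. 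The one caveat is that your value $[V_{w_1}]=2\LLL^4-\LLL^3$ is anticipated from the expected answer rather than derived; note that the leading-term argument cannot be promoted to a proof here, because inhomogeneous perturbations can create or destroy solution branches precisely over the locus where the generic elimination degenerates, so the Gr\"obner analysis over $\{w_1=0\}$ (possibly with auxiliary loci other than $w_{21},w_{22}$) is genuinely the remaining work --- but you flag this yourself, it is the same kind of machine-assisted verification the paper relies on for ${\mathcal Com}_4$, and no idea is missing from your outline.
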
		
	
	Doing the similar but much easier computations for matrix sizes two and three, we observe the pattern that allows us to effectively state the following conjecture:
	\begin{conjecture}
		The classes of the Calogero-Moser spaces and the invariant commuting varieties in the Grothendieck ring are the following:
		\begin{align*}
			[\mathcal{C}_n] &= b_{4n}\, \LLL^{2n} - b_{4n-2}\,\LLL^{2n-1} +\cdots+ (-1)^nb_{2n}\,\LLL^{n} \\
			[{\mathcal Com_n}] &= \LLL^{2n} ,
		\end{align*}
		where $b_i$ is the $i$-th Betti number of the Borel-Moore homology of $\mathcal{C}_n$.
	\end{conjecture}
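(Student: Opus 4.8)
The plan is to prove the two formulas by quite different means: the commuting-variety class reduces to a clean symmetric-power computation, while the Calogero-Moser class will be attacked by equivariant localization once the Betti numbers have been identified. For ${\mathcal Com}_n$ I would exploit the isomorphism ${\mathcal Com}_n\cong S^n\mathbb{C}^2$ recorded in~\eqref{VacDom} (the theorem of Gan--Ginzburg, Domokos and Vaccarino~\cite{GG,D,Va}), which reduces the claim to the identity $[S^n\mathbb{A}^d]=\LLL^{nd}$ specialized to $d=2$. I would establish this by induction on $d$. The coordinate projection $\mathbb{A}^d\to\mathbb{A}^{d-1}$ induces a morphism $S^n\mathbb{A}^d\to S^n\mathbb{A}^{d-1}$; stratifying the base by the partition that records the multiplicities of the points of a configuration, the fibre over a stratum is a product $\prod_k S^{i_k}\mathbb{A}^1$ of symmetric powers of the affine line, each of which is itself an affine space, of total dimension $\sum_k i_k=n$. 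Over each stratum this is a Zariski-locally trivial $\mathbb{A}^n$-bundle, so the scissor relations give $[S^n\mathbb{A}^d]=\LLL^n\,[S^n\mathbb{A}^{d-1}]$; the base case $S^n\mathbb{A}^1\cong\mathbb{A}^n$ (the fundamental theorem of symmetric functions) closes the induction and yields $[{\mathcal Com}_n]=\LLL^{2n}$.

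For $\mathcal{C}_n$ the Betti numbers $b_i$ must first be made explicit. By the Wilson--Nakajima diffeomorphism $\mathcal{C}_n\cong\mathrm{Hilb}_n(\mathbb{C}^2)$~\cite{W,Nak}, the Borel--Moore Betti numbers of $\mathcal{C}_n$ agree with those of the Hilbert scheme, which are given by Göttsche's formula; concretely the odd ones vanish and $b_{4n-2j}=\#\{\lambda\vdash n:\ell(\lambda)=n-j\}$. This turns the right-hand side of the conjecture into an explicit polynomial in $\LLL$, and one checks directly that it reproduces the class $\LLL^8-\LLL^7+2\LLL^6-\LLL^5$ obtained for $n=4$.

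To compute the class itself I would use the $\mathbb{C}^*$-action $(X,Y)\mapsto(tX,t^{-1}Y)$, which fixes the commutator $[X,Y]$ and therefore preserves the condition $\mathrm{rank}([X,Y]+I_n)=1$, descending to $\mathcal{C}_n$. Its fixed locus is finite and indexed by the partitions of $n$, exactly as for $\mathrm{Hilb}_n(\mathbb{C}^2)$, and the $\mathbb{C}^*$-weights on each tangent space $T_\lambda\mathcal{C}_n$ are accessible from the quiver-variety description of $\mathcal{C}_n$. The aim is to produce a Białynicki-Birula-type decomposition of $\mathcal{C}_n$ into affine pieces indexed by the fixed points, to read off their dimensions from the positive tangent weights, and to sum the corresponding powers of $\LLL$; organizing this count according to the statistic $\ell(\lambda)$ is what should recombine into the coefficients $(-1)^j b_{4n-2j}$.

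The hard part will be the hyperbolicity of this action: at every fixed point the tangent space carries both positive and negative weights, so the attracting cells need not exhaust the affine variety $\mathcal{C}_n$ and the naive Białynicki-Birula stratification is not immediately valid. The core of the argument must therefore be to establish a genuine motivic decomposition---for instance through a $\mathbb{C}^*$-equivariant partial compactification making the action semiprojective, or by playing the attracting strata against the repelling ones---and then to verify that the weight bookkeeping returns \emph{exactly} the signed Göttsche polynomial and not merely the correct Betti numbers. This sign pattern, which is what separates $[\mathcal{C}_n]$ from the class of its resolution $\mathrm{Hilb}_n(\mathbb{C}^2)$, is the delicate point, and I expect Procesi's Chevalley-type description of the bracket (see~\eqref{VacDom}) together with the $S^n\mathbb{C}^2$ picture above to be the right tool for controlling it. Failing a uniform localization argument, the explicit presentation of $\mathbb{C}[\mathcal{C}_n]$ predicted by Conjecture~\ref{conjecture} would let one replicate, uniformly in $n$, the direct $K_0$-stratification of $\mathrm{Var}(I)$ carried out for $n=4$, reducing the whole statement to a combinatorial identity among the strata.
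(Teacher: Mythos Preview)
This statement is presented in the paper as a \emph{conjecture}; the authors give no proof, only the explicit computations for $n\le 4$ as evidence. There is therefore no argument in the paper to compare your proposal against, and I can only assess the proposal on its own merits.

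Your treatment of $[{\mathcal Com}_n]=\LLL^{2n}$ is essentially correct and in fact \emph{settles} that half of the conjecture: once ${\mathcal Com}_n\cong S^n\mathbb{C}^2$ is invoked, the identity $[S^n\mathbb{A}^d]=\LLL^{nd}$ is standard (Totaro; G\"ottsche; or the power-structure formalism of Gusein-Zade, Luengo and Melle-Hern\'andez). One caveat: your claim that the $\mathbb{A}^n$-fibration over each multiplicity stratum of $S^n\mathbb{A}^{d-1}$ is \emph{Zariski}-locally trivial is not automatic when the partition has repeated parts, since the residual symmetric group permuting equal blocks obstructs a Zariski-local ordering of the colliding points; you should either justify this carefully or simply cite one of the references above. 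The paper does not observe that this half is already a theorem.

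For $[\mathcal{C}_n]$ there is a more basic obstruction than hyperbolicity. Any Bia\l{}ynicki--Birula or affine-paving argument expresses the class as a sum $\sum_i \LLL^{d_i}$ with \emph{nonnegative} coefficients, whereas the conjectured polynomial has a strictly negative coefficient (namely $-b_{4n-2}=-1$) for every $n\ge 2$; no stratification into affine cells can therefore reach the target, however the compactification is arranged. Worse, the target itself appears to be wrong. Specialising $\LLL\mapsto 1$ must give $\chi(\mathcal{C}_n)=\chi(\mathrm{Hilb}_n(\mathbb{C}^2))=p(n)$, but the conjectured expression specialises to the alternating sum $\sum_{j}(-1)^j b_{4n-2j}$, which equals $0,\,1,\,1$ for $n=2,3,4$ rather than $2,\,3,\,5$. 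For $n=2$ one checks this directly from the presentation $\mathbb{C}[\mathcal{C}_2]\cong\mathbb{C}[a_1,a_2]\otimes\mathbb{C}[a_3,a_4,a_5]/(a_4^2-a_3a_5-1)$: stratifying the affine quadric by $a_3=0$ versus $a_3\neq 0$ gives $[\mathcal{C}_2]=\LLL^2(\LLL^2+\LLL)=\LLL^4+\LLL^3$, not the $\LLL^4-\LLL^3$ predicted by the formula. So before refining your localisation strategy you should revisit the statement itself, and the unproved $n=4$ computation on which the conjecture is modelled.
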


	\section*{Acknowledgments}
	We would like to thank Ivan Shestakov, Efim Zelmanov, Vyacheslav Futorny, Claudio Procesi and Yura Berest
	for their valuable discussions. We would also like to thank Ulrich Thiel for pointing us their algorithmic approach developed in~\cite{BT}. The authors are grateful to the anonymous referee for their valuable comments, which have helped improve the manuscript.
	The second author would like to thank the V.~I.~Romanovskiy Institute of Mathematics for their kind hospitality during his stays in Tashkent.

	\appendix
	\section{List of equations} \label{appendix}
	\subsection{Calogero-Moser space}\label{ap:CM}
	
	The following 12 polynomials are the defining relations for the coordinate ring of the fourth Calogero-Moser space: 
	\begin{align*}
		r_1={}& 8a_3+a_3(a_4^2-a_3a_5)-2(a_7^2-a_6a_8)+2(a_3a_{12}-2a_4a_{11}+a_5a_{10})\\
		r_2={}& 48a_6-a_{6}(4a_{4}^2-a_{3}a_{5})-3a_{3}(a_3a_{8}-2a_{4}a_{7})+12(a_{6}a_{12}-2a_{7}a_{11}+a_{8}a_{10})\\
		r_3={}& -2a_{4}a_{5}a_{6}+3a_{3}a_{5}a_{7}-a_{3}^2a_{9}+4(2a_{6}a_{13}-3a_{7}a_{12}+a_{9}a_{10})\\
		r_4={}& -6a_3^2+24a_{10}-6a_{10}(a_{4}^2+a_{3}a_{5})-3a_{3}^2(a_{4}^2-a_{3}a_{5})\\
		& +2a_6(a_{3}a_{8}-2a_{4}a_{7}+a_{5}a_{6}) -12a_{3}(a_3a_{12}-2a_{4}a_{11})+24(a_{10}a_{12}-a_{11}^2)\\
		r_5={}& 12a_{4}a_{5}-48a_{13}+3a_4a_5(a_{3}a_{5}-a_{4}^2)+2(a_{3}a_{8}a_{9}-2a_{4}a_{7}a_{9}+a_{5}a_{6}a_{9})\\
		&-3a_4(a_{5}a_{12}-4a_4a_{13}+3a_3a_{14})+3a_{5}(a_{3}a_{13}-a_{5}a_{11})+ 12(a_{11}a_{14}-a_{12}a_{13})\\
		\empty\\
		t_1={}&8a_5+a_5(a_4^2-a_3a_5)-2(a_8^2-a_7a_9)+2(a_5a_{12}-2a_4a_{13}+a_3a_{14})\\
		t_2={}&48a_9-a_{9}(4a_{4}^2-a_{3}a_{5})-3a_{5}(a_5a_{7}-2a_{4}a_{8})+12(a_{9}a_{12}-2a_{8}a_{13}+a_{7}a_{14})\\
		t_3={}& -2a_{3}a_{4}a_{9}+3a_{3}a_{5}a_{8}-a_{5}^2a_{6}+4(2a_{9}a_{11}-3a_{8}a_{12}+a_{6}a_{14})\\
		t_4={}& -6a_5^2+24a_{14}-6a_{14}(a_{4}^2+a_{3}a_{5})-3a_{5}^2(a_{4}^2-a_{3}a_{5})\\
		&+2a_9(a_{5}a_{7}-2a_{4}a_{8}+a_{3}a_{9})  -12a_{5}(a_5a_{12}-2a_{4}a_{13})+24(a_{12}a_{14}-a_{13}^2)\\
		\empty\\
		s_1={}&-4a_{4}+a_4(a_{4}^2-a_{3}a_{5})+(a_{6}a_{9}-a_{7}a_{8})+2(a_{3}a_{13}-2a_{4}a_{12}+a_{5}a_{11})\\
		s_2={}& -96+2(9a_3a_5+16a_4^2)-72a_{12}+2a_4^2(a_3a_5-a_4^2)-a_3(a_3a_{14}-2a_5a_{12})\\
		&+2(a_3a_7a_9-2a_4a_7a_8+a_5a_6a_8)-a_5(a_5a_{10}-2a_3a_{12})\\
		&-2a_4(3a_3a_{13}-5a_4a_{12}+3a_5a_{11})+4(a_{10}a_{14}+2a_{11}a_{13}-3a_{12}^2)\\
		s_3={}& 12a_3a_5-48a_{12}+2a_{12}(2a_4^2+a_3a_5)+a_3(a_3a_{14}-4a_4a_{13})\\
		&+a_5(a_5a_{10}-4a_4a_{11})-4(a_{10}a_{14}-4a_{11}a_{13}+3a_{12}^2) 
	\end{align*}
	The extra three polynomials:
	\begin{align*}
		r_6={}&48a_7-3a_{7}(4a_{4}^2+a_{3}a_{5})+4a_{4}a_{5}a_{6}+18a_{3}a_{4}a_{8}-7a_{3}^2a_{9}\\
		&+12(a_{9}a_{10}-2a_{8}a_{11}+a_{7}a_{12}) \\
		t_5={}& 12a_{3}a_{4}-48a_{11}+3a_3a_4(a_{3}a_{5}-a_{4}^2)+2(a_{5}a_{6}a_{7}-2a_{4}a_{6}a_{8}+a_{3}a_{6}a_{9})\\
		&-3a_4(a_{3}a_{12}-4a_4a_{11}+3a_5a_{10})+3a_{3}(a_{5}a_{11}-a_{3}a_{13})+12(a_{10}a_{13}-a_{11}a_{12})\\
		t_6={}& 48a_8-3a_{8}(4a_{4}^2+a_{3}a_{5})+4a_{3}a_{4}a_{9}+18a_{4}a_{5}a_{7}-7a_{5}^2a_{6} \\
		& +12(a_{6}a_{14}-2a_{7}a_{13}+a_{8}a_{12}).
	\end{align*}

	\subsection{Generators of the free \texorpdfstring{$\ring$-module}{Generators of the free module}}\label{ap:free} \hfill
	\begin{align*}
		&1, \quad  
		a_{4}, \quad 
		a_{7}, \quad a_{8}, \quad 
		a_{4}^{2}, \quad a_{11}, \quad a_{12}, \quad a_{13}, \quad 
		a_{4}a_{7}, \quad a_{4}a_{8}, \quad 
		a_{4}^{3}, \quad a_{4}a_{11}, \quad a_{4}a_{12}, \\ &a_{4}a_{13}, \quad 
		a_{4}^{2}a_{7}, \quad a_{4}^{2}a_{8}, \quad 
		a_{4}^{4}, \quad a_{4}^{2}a_{11}, \quad a_{4}^{2}a_{12}, \quad a_{4}^{2}a_{13}, \quad 
		a_{4}^{3}a_{7}, \quad a_{4}^{3}a_{8}, \quad 
		a_{4}^{5}, \quad 
		a_{4}^{6}
	\end{align*}
	
	\subsection{Commuting variety}\label{ap:CV}
	\begin{align*}\label{relations_comm_var}
		&r_1-8a_3,&  &r_2-48a_{6},&  &r_3,&  &r_4+6a_{3}^2 -24a_{10},&  &r_5-12a_{4}a_{5}+48a_{13},&  &r_6-48a_{7},&  \\
		&t_1-8a_{5},&  &t_2-48a_{9},&  &t_3,&  &t_4+6a_{5}^2-24a_{14},&  &t_5-12a_{3}a_{4}+48a_{11},&  &t_6-48a_{8},&
	\end{align*}
	\[
	s_1+4a_{4}, \
	s_2-18a_{3}a_{5}-32a_{4}^2+72a_{12}+96, \
	s_3-12a_{3}a_{5}+48a_{12}.
	\]

\end{document}